\theoremstyle{plain}
  \newtheorem{theorem}{Theorem}[section]
  \newtheorem{lemma}[theorem]{Lemma}
  \newtheorem{conjecture}[theorem]{Conjecture}
\theoremstyle{definition}
  \newtheorem{example}[theorem]{Example}
  \newtheorem{remark}[theorem]{Remark}
\newenvironment{acknowledgements}{\bigskip\textbf{Acknowledgements.}}{}
\newenvironment{enumeratealpha}{\begin{enumerate}[label=(\alph*)]}{\end{enumerate}}
\renewcommand{\geq}{\geqslant}
\renewcommand{\leq}{\leqslant}
\newcommand{\nequiv}{\not\equiv}
\begin{document}

\title[Multivariate Ap\'ery numbers and supercongruences]{Multivariate Ap\'ery numbers and supercongruences of rational functions}

\author{Armin Straub}
% \address{Department of Mathematics,
% University of Illinois at Urbana-Champaign,
% 1409 W. Green St,
% Urbana, IL 61801,
% United States}
% \curraddr{Max-Planck-Institut f\"ur Mathematik,
% Vivatsgasse 7,
% 53111 Bonn,
% Germany}
\address{Department of Mathematics,
University of Illinois at Urbana-Champaign
\\ \& \\Max-Planck-Institut f\"ur Mathematik}
\email{astraub@illinois.edu}

\begin{abstract}
  One of the many remarkable properties of the Ap\'ery numbers $A (n)$,
  introduced in Ap\'ery's proof of the irrationality of $\zeta (3)$, is that
  they satisfy the two-term supercongruences
  \begin{equation*}
    A (p^r m) \equiv A (p^{r - 1} m) \pmod{p^{3 r}}
  \end{equation*}
  for primes $p \geq 5$. Similar congruences are conjectured to hold for
  all Ap\'ery-like sequences. We provide a fresh perspective on the
  supercongruences satisfied by the Ap\'ery numbers by showing that they
  extend to all Taylor coefficients $A (n_1, n_2, n_3, n_4)$ of the rational
  function
  \begin{equation*}
    \frac{1}{(1 - x_1 - x_2) (1 - x_3 - x_4) - x_1 x_2 x_3 x_4} .
  \end{equation*}
  The Ap\'ery numbers are the diagonal coefficients of this function, which
  is simpler than previously known rational functions with this property.
  
  Our main result offers analogous results for an infinite family of
  sequences, indexed by partitions $\lambda$, which also includes the Franel
  and Yang--Zudilin numbers as well as the Ap\'ery numbers corresponding to
  $\zeta (2)$. Using the example of the Almkvist--Zudilin numbers, we further
  indicate evidence of multivariate supercongruences for other Ap\'ery-like
  sequences.
\end{abstract}

\subjclass[2010]{Primary 11A07, 11B83; Secondary 11B37}
% 11A07 Number theory: Elementary number theory: Congruences; primitive roots; residue systems
% 11B83 Number theory: Sequences and sets: Special sequences and polynomials
% 11B37 Number theory: Sequences and sets: Recurrences
% 05A10 Combinatorics: Enumerative combinatorics: Factorials, binomial coefficients, combinatorial functions

\keywords{Ap\'ery numbers, supercongruences, diagonals of rational functions, Almkvist--Zudilin numbers}

\date{October 22, 2014}
% \date{\today}

\maketitle

\section{Introduction}

The {\emph{Ap\'ery numbers}}
\begin{equation}
  A (n) = \sum_{k = 0}^n \binom{n}{k}^2 \binom{n + k}{k}^2 \label{eq:apery}
\end{equation}
played a crucial role in R.~Ap\'ery's proof \cite{apery}, \cite{alf} of
the irrationality of $\zeta (3)$ and have inspired much further work. Among
many other interesting properties, they satisfy congruences with surprisingly
large moduli, referred to as {\emph{supercongruences}}, a term coined by
F.~Beukers \cite{beukers-apery85}. For instance, for all primes $p \geq
5$ and all positive integers $r$,
\begin{equation}
  A (p^r m) \equiv A (p^{r - 1} m) \pmod{p^{3 r}} .
  \label{eq:apery-sc}
\end{equation}
The special case $m = 1$, $r = 1$ was conjectured by S.~Chowla, J.~Cowles and
M.~Cowles \cite{ccc-apery}, who established the corresponding congruence
modulo $p^2$. The case $r = 1$ was subsequently shown by I.~Gessel
\cite{gessel-super} and Y.~Mimura \cite{mimura-apery}, while the general
case has been proved by M.~Coster \cite{coster-sc}. The proof is an adaption
of F.~Beukers' proof \cite{beukers-apery85} of the related congruence
\begin{equation}
  A (p^r m - 1) \equiv A (p^{r - 1} m - 1) \pmod{p^{3 r}},
  \label{eq:apery-sc1}
\end{equation}
again valid for all primes $p \geq 5$ and all positive integers $r$. That
congruence \eqref{eq:apery-sc1} can be interpreted as an extension of
\eqref{eq:apery-sc} to negative integers is explained in
Remark~\ref{rk:apery-shifted}. For further congruence properties of the
Ap\'ery numbers we refer to \cite{cowles-apery}, \cite{beukers-apery87},
\cite{ahlgren-ono-apery}, \cite{kilbourn-apery06}.

Given a series
\begin{equation}
  F (x_1, \ldots, x_d) = \sum_{n_1, \ldots, n_d \geq 0} a (n_1, \ldots,
  n_d) x_1^{n_1} \cdots x_d^{n_d}, \label{eq:taylor}
\end{equation}
its {\emph{diagonal coefficients}} are the coefficients $a (n, \ldots, n)$ and
the {\emph{diagonal}} is the ordinary generating function of the diagonal
coefficients. For our purposes, $F$ will always be a rational function. It is
well-known, see for instance \cite[Theorem 5.2]{lp-diag}, that the diagonal
of a rational function satisfies a Picard--Fuchs linear differential equation
and as such ``comes from geometry''. In particular, the diagonal coefficients
satisfy a linear recurrence with polynomial coefficients.

Many sequences of number-theoretic interest can be represented as the diagonal
coefficients of rational functions. In particular, it is known
\cite{christol-diag84}, \cite{lp-diag} that the Ap\'ery numbers are the
diagonal coefficients of the rational function
\begin{equation}
  \frac{1}{(1 - x_1) [ (1 - x_2) (1 - x_3) (1 - x_4) (1 - x_5) - x_1 x_2 x_3]}
  . \label{eq:apery-rf5}
\end{equation}
Several other rational functions of which the Ap\'ery numbers are the
diagonal coefficients are given in \cite{bbchm-diag}, where it is also
discussed how these can be obtained from the representation of the Ap\'ery
numbers as the binomial sum \eqref{eq:apery}. However, all of these rational
function involve at least five variables and, in each case, the polynomial in
the denominator factors. Our first result shows that, in fact, the Ap\'ery
numbers are the diagonal coefficients of a simpler rational function in only
four variables.

\begin{theorem}
  \label{thm:apery-diag}The Ap\'ery numbers $A (n)$, defined in
  \eqref{eq:apery}, are the diagonal coefficients of
  \begin{equation}
    \frac{1}{(1 - x_1 - x_2) (1 - x_3 - x_4) - x_1 x_2 x_3 x_4} .
    \label{eq:apery-rf}
  \end{equation}
\end{theorem}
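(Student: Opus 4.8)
The plan is to read off the diagonal coefficient $[x_1^n x_2^n x_3^n x_4^n]$ directly from a geometric series expansion of \eqref{eq:apery-rf}. Writing $u = (1-x_1-x_2)(1-x_3-x_4)$ and $v = x_1 x_2 x_3 x_4$, the denominator is $u - v$, and since $u$ has constant term $1$ while $v/u$ has only terms of positive total degree, the expansion
\begin{equation*}
  \frac{1}{u-v} = \sum_{k\geq 0} \frac{v^k}{u^{k+1}} = \sum_{k\geq 0} \frac{(x_1 x_2 x_3 x_4)^k}{(1-x_1-x_2)^{k+1}(1-x_3-x_4)^{k+1}}
\end{equation*}
is valid as an identity of formal power series in $\mathbb{Q}[[x_1,x_2,x_3,x_4]]$.

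The key structural observation is that, apart from the monomial $(x_1 x_2 x_3 x_4)^k$, the $k$-th summand factors as a function of $(x_1,x_2)$ times a function of $(x_3,x_4)$. Hence extracting the diagonal coefficient reduces, for each $k$, to a product of two coefficient extractions
\begin{equation*}
  \Bigl[x_1^{n-k} x_2^{n-k}\Bigr]\frac{1}{(1-x_1-x_2)^{k+1}} \quad\text{and}\quad \Bigl[x_3^{n-k} x_4^{n-k}\Bigr]\frac{1}{(1-x_3-x_4)^{k+1}},
\end{equation*}
which are equal to one another. I would evaluate either factor using the negative binomial series $\tfrac{1}{(1-x-y)^{k+1}} = \sum_{m\geq 0}\binom{m+k}{k}(x+y)^m$ and picking off the coefficient of $x^{n-k}y^{n-k}$, which comes only from the term $m = 2(n-k)$; this yields $\binom{2n-k}{k}\binom{2n-2k}{n-k}$.

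Combining these steps, the diagonal coefficient equals
\begin{equation*}
  \sum_{k=0}^n \binom{2n-k}{k}^2\binom{2n-2k}{n-k}^2,
\end{equation*}
the upper limit arising from the requirement $n-k\geq 0$. The remaining, and only non-formal, step is to identify this sum with $A(n)$. Substituting $j = n-k$ turns the summand into $\binom{n+j}{2j}^2\binom{2j}{j}^2$, and the elementary identity $\binom{n+j}{2j}\binom{2j}{j} = \binom{n}{j}\binom{n+j}{j}$ (both sides equal $\tfrac{(n+j)!}{(n-j)!\,j!\,j!}$) rewrites it as $\binom{n}{j}^2\binom{n+j}{j}^2$, so that the sum is exactly \eqref{eq:apery}. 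I expect this final binomial identity to be the only genuine content of the argument: everything preceding it is a routine manipulation of formal power series, and the main point to be careful about is justifying the geometric expansion and the separation of variables.
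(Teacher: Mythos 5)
Your proof is correct and follows essentially the same route as the paper's proof of the general Theorem~\ref{thm:aperyx-rf} (of which this is the case $\lambda=(2,2)$, $\alpha=1$): geometric expansion of $1/(u-v)$ followed by the negative binomial series. The only difference is that you restrict to the diagonal from the outset, so you end with the sum $\sum_k \binom{2n-k}{k}^2\binom{2n-2k}{n-k}^2$ and need the (correctly verified) identity $\binom{n+j}{2j}\binom{2j}{j}=\binom{n}{j}\binom{n+j}{j}$, whereas the paper first derives the full multivariate formula \eqref{eq:apery4x} and then specializes, needing only the reindexing $k\mapsto n-k$.
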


Representing a sequence as the diagonal of a rational function has certain
benefits. For instance, asymptotic results can be obtained directly and
explicitly from this rational function. This is the subject of
{\emph{multivariate asymptotics}}, as developed in \cite{pw-masy1}. For
details and a host of worked examples we refer to \cite{pw-masy-eg}. As a
second example, the rational generating function provides a means to compute
the sequence modulo a fixed prime power. Indeed, the diagonal of a rational
function with integral Taylor coefficients, such as \eqref{eq:apery-rf}, is
algebraic modulo $p^{\alpha}$ for any $\alpha$ \cite{lp-diag}. A recent
demonstration that this can be done very constructively is given in
\cite{ry-diag13}, where the values modulo $p^{\alpha}$ of sequences such as
the Ap\'ery numbers are, equivalently, encoded as finite automata.

We note that a statement such as Theorem~\ref{thm:apery-diag} is more or less
automatic to prove once discovered. For instance, given a rational function,
we can always repeatedly employ a binomial series expansion to represent the
Taylor coefficients as a nested sum of hypergeometric terms. In principle,
creative telescoping \cite{aeqb} will then obtain a linear recurrence
satisfied by the diagonal coefficients, in which case it suffices to check
that the alternative expression satisfies the same recurrence and agrees for
sufficiently many initial values.

For the rational function $F (\boldsymbol{x})$ given in \eqref{eq:apery-rf}, we
can gain considerably more insight. Indeed, for all the Taylor coefficients $A
(\boldsymbol{n})$, defined by
\begin{equation}
  F (x_1, x_2, x_3, x_4) = \sum_{n_1, n_2, n_3, n_4 \geq 0} A (n_1, n_2,
  n_2, n_4) x_1^{n_1} x_2^{n_2} x_3^{n_3} x_4^{n_4}, \label{eq:apery4}
\end{equation}
we find, for instance by applying MacMahon's Master Theorem
\cite{macmahon-comb} as detailed in Section~\ref{sec:coeffs}, the explicit
formula
\begin{equation}
  A (\boldsymbol{n}) = \sum_{k \in \mathbb{Z}} \binom{n_1}{k} \binom{n_3}{k}
  \binom{n_1 + n_2 - k}{n_1} \binom{n_3 + n_4 - k}{n_3}, \label{eq:apery4x}
\end{equation}
of which Theorem~\ref{thm:apery-diag} is an immediate consequence.

An instance of our main result is the observation that the supercongruence
\eqref{eq:apery-sc} for the Ap\'ery numbers generalizes to all coefficients
\eqref{eq:apery4x} of the rational function \eqref{eq:apery-rf} in the
following sense.

\begin{theorem}
  \label{thm:apery4-sc}Let $\boldsymbol{n}= (n_1, n_2, n_3, n_4) \in
  \mathbb{Z}^4$. The coefficients $A (\boldsymbol{n})$, defined in
  \eqref{eq:apery4} and extended to negative integers by \eqref{eq:apery4x},
  satisfy, for primes $p \geq 5$ and positive integers $r$, the
  supercongruences
  \begin{equation}
    A (p^r \boldsymbol{n}) \equiv A (p^{r - 1} \boldsymbol{n}) \pmod{p^{3 r}} . \label{eq:apery-scr}
  \end{equation}
\end{theorem}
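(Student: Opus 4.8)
The plan is to work entirely with the explicit formula \eqref{eq:apery4x}, which exhibits $A(\boldsymbol{n})$ as the convolution $A(\boldsymbol{n}) = \sum_{k} B(n_1,n_2;k)\,B(n_3,n_4;k)$ of the symmetric trinomial quantities $B(a,b;k) = \binom{a}{k}\binom{a+b-k}{a} = \frac{(a+b-k)!}{k!\,(a-k)!\,(b-k)!}$. I would prove \eqref{eq:apery-scr} by induction on $r$, at each stage comparing $A(p^r \boldsymbol{n})$ with $A(p^{r-1}\boldsymbol{n})$ by splitting the summation index $k$ according to whether $p \mid k$. Writing the aligned part as $k = p\kappa$ matches the summand of $A(p^r\boldsymbol{n})$ against that of $A(p^{r-1}\boldsymbol{n})$ factor by factor, while the remaining sum over $p \nmid k$ is to be shown negligible modulo $p^{3r}$.

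For the divisibility bookkeeping I would use Legendre's and Kummer's theorems in the sharp form $v_p\binom{p^r n}{k} \geq r - v_p(k)$, which follows from $\binom{p^r n}{k} = \frac{p^r n}{k}\binom{p^r n - 1}{k-1}$. Applied to the two inner factors $\binom{p^r n_1}{k}$ and $\binom{p^r n_3}{k}$, this already forces every term with $p \nmid k$ to be divisible by $p^{2r}$, which settles the ``Dwork level'' of the congruence and reduces the problem to controlling those terms modulo one further power of $p^r$. On the aligned part, the classical Jacobsthal--Kazandzidis congruence $\binom{pa}{pb} \equiv \binom{a}{b} \pmod{p^3}$, valid precisely for $p \geq 5$ (which is where the hypothesis enters), lets me replace each scaled binomial by its unscaled counterpart and recover the shape of $A(p^{r-1}\boldsymbol{n})$.

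The hard part, the genuine ``super'' phenomenon, is that neither the off-aligned terms nor the individual aligned factors satisfy the full modulus on their own; indeed $\binom{p^r a}{p^r b} \nequiv \binom{p^{r-1} a}{p^{r-1} b} \pmod{p^{3r}}$ in general, so the missing factor $p^r$ must arise from cancellation within the sum rather than from term-by-term estimates. My plan for this crux is to adapt the method of Beukers and Coster: expand the two outer factors $\binom{p^r(n_1+n_2)-k}{p^r n_1}$ and $\binom{p^r(n_3+n_4)-k}{p^r n_3}$ about their aligned values using the enhanced Kazandzidis modulus $p^3\, ab(a-b)\binom{a}{b}$, and then show that the resulting leading correction, once summed over $k$, telescopes or cancels using the symmetry $B(a,b;k) = B(b,a;k)$ together with a reindexing of $k$. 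Reorganizing the entire summand through the $p$-adic Gamma function $\Gamma_p$ and expanding $\log \Gamma_p$ to third order is a promising way to make this cancellation transparent, since the first-order contributions are exactly the ones that must vanish.

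I expect this cancellation step, and its propagation through the induction on $r$ in the Dwork style (lifting the modulus from $p^{3(r-1)}$ to $p^{3r}$), to be the main obstacle; the two estimates above are comparatively routine. Finally, the extension to $\boldsymbol{n} \in \mathbb{Z}^4$ with negative entries requires no new idea: formula \eqref{eq:apery4x} defines $A(\boldsymbol{n})$ for all integer arguments through the standard convention for binomial coefficients, the same splitting applies verbatim, and specializing to the diagonal $\boldsymbol{n} = (m,m,m,m)$ recovers \eqref{eq:apery-sc} for the Ap\'ery numbers themselves.
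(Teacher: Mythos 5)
Your skeleton is the right one and matches the paper's: split the sum over $k$ by $p$-adic valuation, handle the $p\mid k$ part termwise via the sharp Jacobsthal congruence (whose ratio form modulo $p^{3}ab(a-b)/12$ must be combined with the divisibility $p^{2(r-s)}\mid$ term when $v_p(k)=s<r$ --- the bookkeeping $(r+2s)+2(r-s)=3r$ is exactly how the paper's Lemma~\ref{lem:Alambdak} closes this case), and obtain the termwise bound $p^{2r}$ for the $p\nmid k$ part from $\binom{p^rn}{k}=\frac{p^rn}{k}\binom{p^rn-1}{k-1}$. (One side remark: your claim that $\binom{p^ra}{p^rb}\nequiv\binom{p^{r-1}a}{p^{r-1}b}\pmod{p^{3r}}$ in general is false for $p\geq 5$; the fully aligned terms do work factor by factor, and no cancellation across the sum is needed except in the $p\nmid k$ tail.)

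The genuine gap is that the one step you correctly identify as the crux --- extracting the extra factor $p^r$ from the sum over $p\nmid k$ --- is not carried out, and the mechanisms you gesture at would not produce it. The symmetry $B(a,b;k)=B(b,a;k)$ is a triviality of the factorial form and yields no cancellation in $\sum_k B(n_1,n_2;k)B(n_3,n_4;k)$; there is no telescoping here, and a $\log\Gamma_p$ expansion is not what does the work. The actual mechanism (following Beukers) is to pull the full power of $p$ out of the two \emph{inner} binomials at once, writing
\begin{equation*}
  \binom{p^rn_1}{k}\binom{p^rn_3}{k}=\frac{p^{2r}n_1n_3}{k^2}\binom{p^rn_1-1}{k-1}\binom{p^rn_3-1}{k-1},
\end{equation*}
so that the tail equals $p^{2r}n_1n_3\sum_{p\nmid k}k^{-2}\,C(p^r\boldsymbol{n};k-1)$ with $C$ integral. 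The extra $p^r$ then comes from the Wolstenholme-type congruence $\sum_{k=1,\,p\nmid k}^{p^s-1}k^{-2}\equiv 0\pmod{p^s}$ (this, not Kazandzidis, is where $p\geq 5$ enters for the tail), combined with the fact that $C(p^r\boldsymbol{n};k)\equiv C(p^{r-1}\boldsymbol{n};[k/p])\pmod{p^r}$ --- i.e.\ the cofactor is nearly constant on residue classes, by congruences of the type $\binom{p^rm-1}{k}(-1)^k\equiv\binom{p^{r-1}m-1}{[k/p]}(-1)^{[k/p]}\pmod{p^r}$ --- and an inductive summation lemma that converts these two facts into $\sum_{p\nmid k}k^{-2}C(p^r\boldsymbol{n};k)\equiv 0\pmod{p^r}$. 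Without this (or an equivalent), your argument stops at modulus $p^{2r}$. Finally, the extension to negative entries of $\boldsymbol{n}$ is not quite free: it requires extending both Jacobsthal's congruence and the displayed $\binom{p^rm-1}{k}$ congruence to negative arguments via the negation rule for binomial coefficients, which the paper does in Lemmas~\ref{lem:jacobsthal} and~\ref{lem:binomind}.
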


Note that the Ap\'ery numbers are $A (n) = A (n, n, n, n)$ so that
\eqref{eq:apery-scr} indeed generalizes \eqref{eq:apery-sc}. Our reason for
allowing negative entries in $\boldsymbol{n}$ is that by doing so, we also
generalize Beukers' supercongruence \eqref{eq:apery-sc1}. Indeed, as explained
in Remark~\ref{rk:apery-shifted} below, $A (n - 1) = A (- n, - n, - n, - n)$.
Theorem~\ref{thm:apery4-sc} is a special case of our main result,
Theorem~\ref{thm:aperyx-sc}, in which we prove such supercongruences for an
infinite family of sequences. This family includes other Ap\'ery-like
sequences such as the Franel and Yang--Zudilin numbers as well as the
Ap\'ery numbers corresponding to $\zeta (2)$.

We therefore review Ap\'ery-like sequences in Section~\ref{sec:aperyx}.
Though no uniform reason is known, each Ap\'ery-like sequence appears to
satisfy a supercongruence of the form \eqref{eq:apery-sc}, some of which have
been proved \cite{beukers-apery85}, \cite{coster-sc}, \cite{ccs-apery},
\cite{os-supercong11}, \cite{os-domb}, \cite{oss-sporadic} while others
remain open \cite{oss-sporadic}. A major motivation for this note is to work
towards an understanding of this observation. Our contribution to this
question is the insight that, at least for several Ap\'ery-like sequences,
these supercongruences generalize to all coefficients of a rational function.
Our main result, which includes the case of the Ap\'ery numbers outlined in
this introduction, is given in Section~\ref{sec:main}. In that section, we
also record two further conjectural instances of this phenomenon. Finally, we
provide proofs for our results in Sections~\ref{sec:coeffs} and \ref{sec:sc}.

\begin{remark}
  \label{rk:apery-shifted}Let us indicate that congruence \eqref{eq:apery-sc1}
  can be interpreted as the natural extension of \eqref{eq:apery-sc} to the
  case of negative integers $m$. To see this, generalize the definition
  \eqref{eq:apery} of the Ap\'ery numbers $A (n)$ to all integers $n$ by
  setting
  \begin{equation}
    A (n) = \sum_{k \in \mathbb{Z}} \binom{n}{k}^2 \binom{n + k}{k}^2 .
    \label{eq:apery:neg}
  \end{equation}
  Here, we assume the values of the binomial coefficients to be defined as the
  (limiting) values of the corresponding quotient of gamma functions, that is,
  \begin{equation*}
    \binom{n}{k} = \lim_{z \rightarrow 0} \frac{\Gamma (z + n + 1)}{\Gamma (z
     + k + 1) \Gamma (z + n - k + 1)} .
  \end{equation*}
  Since $\Gamma (z + 1)$ has no zeros, and poles only at negative integers
  $z$, one observes that the binomial coefficient $\binom{n}{k}$ is finite for
  all integers $n$ and $k$. Moreover, the binomial coefficient with integer
  entries is nonzero only if either $k \geq 0$ and $n - k \geq 0$,
  or if $n < 0$ and $k \geq 0$, or if $n < 0$ and $n - k \geq 0$.
  Note that in each of these cases $k \geq 0$ or $n - k \geq 0$, so
  that the symmetry $\binom{n}{k} = \binom{n}{n - k}$ allows us to compute
  these binomial coefficients in the obvious way. For instance, $\binom{- 3}{-
  5} = \binom{- 3}{2} = \frac{(- 3) (- 4)}{2!} = 6$. As carefully shown in
  \cite{sprugnoli-binom}, for all integers $n$ and $k$, we have the negation
  rule
  \begin{equation}
    \binom{n}{k} = \operatorname{sgn} (k) (- 1)^k \binom{- n + k - 1}{k},
    \label{eq:binom-neg}
  \end{equation}
  where $\operatorname{sgn} (k) = 1$ for $k \geq 0$ and $\operatorname{sgn} (k) = - 1$
  for $k < 0$. Applying \eqref{eq:binom-neg} to the sum \eqref{eq:apery:neg},
  we find that
  \begin{equation*}
    A (- n) = A (n - 1) .
  \end{equation*}
  In particular, the congruence \eqref{eq:apery-sc1} is equivalent to
  \eqref{eq:apery-sc} with $- m$ in place of $m$.
\end{remark}

\begin{remark}
  \label{rk:dwork}The proof of formula \eqref{eq:apery4x} in
  Section~\ref{sec:coeffs} shows that the coefficients can be expressed as
  \begin{equation*}
    A (n_1, n_2, n_3, n_4) = \operatorname{ct} \frac{(x_1 + x_2 + x_3)^{n_1} (x_1 +
     x_2)^{n_2} (x_3 + x_4)^{n_3} (x_2 + x_3 + x_4)^{n_4}}{x_1^{n_1} x_2^{n_2}
     x_3^{n_3} x_4^{n_4}},
  \end{equation*}
  representing them as the constant terms of Laurent polynomials. In
  particular, the Ap\'ery numbers \eqref{eq:apery} are the constant term of
  powers of a Laurent polynomial. Namely,
  \begin{equation*}
    A (n) = \operatorname{ct} \left[ \frac{(x_1 + x_2) (x_3 + 1) (x_1 + x_2 + x_3)
     (x_2 + x_3 + 1)}{x_1 x_2 x_3} \right]^n .
  \end{equation*}
  Since the Newton polyhedron of this Laurent polynomial has the origin as its
  only interior integral point, the results of \cite{sd-laurent09},
  \cite{mv-laurent13} apply to show that $A (n)$ satisfies the Dwork
  congruences
  \begin{equation*}
    A (p^r m + n) A (\lfloor n / p \rfloor) \equiv A (p^{r - 1} m + \lfloor n
     / p \rfloor) A (n) \pmod{p^r}
  \end{equation*}
  for all primes $p$ and all integers $m, n \geq 0$, $r \geq 1$. In
  particular,
  \begin{equation}
    A (p^r m) \equiv A (p^{r - 1} m) \pmod{p^r},
    \label{eq:apery-sc-r1}
  \end{equation}
  which is a weaker version of \eqref{eq:apery-sc} that holds for the large
  class of sequences represented as the constant term of powers of a Laurent
  polynomial, subject only to the condition on the Newton polyhedron. This
  gives another indication why congruence \eqref{eq:apery-sc} is referred to
  as a supercongruence. It would be of considerable interest to find similarly
  well-defined classes of sequences for which supercongruences, of the form
  \eqref{eq:apery-sc-r1} but modulo $p^{k r}$ for $k > 1$, hold. Let us note
  that the case $r = 1$ of the Dwork congruences implies the Lucas congruences
  \begin{equation*}
    A ( n) \equiv A ( n_0) A ( n_1) \cdots A ( n_{\ell}) \pmod{p},
  \end{equation*}
  where $n_0, \ldots, n_{\ell} \in \{ 0, 1, \ldots, p - 1 \}$ are the $p$-adic
  digits of $n = n_0 + n_1 p + \ldots + n_{\ell} p^{\ell}$. It is shown in
  \cite{ry-diag13} that Lucas congruences hold for all Taylor coefficients
  of certain rational functions. Additional divisibility properties in this
  direction are obtained in \cite{delaygue-apery} for Ap\'ery-like numbers
  as well as for constant terms of powers of certain Laurent polynomials.
  Finally, we note that an extension of Dwork congruences to the multivariate
  setting has been considered in \cite{kr-multivariate}. In contrast to our
  approach, where, for instance, the Ap\'ery numbers appear as the diagonal
  (multivariate) Taylor coefficients of a multivariate function $F
  (\boldsymbol{x})$, the theory developed in \cite{kr-multivariate} is
  concerned with functions $G (\boldsymbol{x}) = G (x_1, \ldots, x_d)$ for
  which, say, the Ap\'ery numbers are the (univariate) Taylor coefficients
  of the specialization $G (x, \ldots, x)$.
\end{remark}

\section{Review of Ap\'ery-like numbers}\label{sec:aperyx}

The Ap\'ery numbers $A (n)$ are characterized by the $3$-term recurrence
\begin{equation}
  (n + 1)^3 u_{n + 1} = (2 n + 1) (a n^2 + a n + b) u_n - n (c n^2 + d) u_{n -
  1}, \label{eq:rec3-abcd}
\end{equation}
where $(a, b, c, d) = (17, 5, 1, 0)$, together with the initial conditions
\begin{equation}
  u_{- 1} = 0, \hspace{1em} u_0 = 1. \label{eq:apery-rec0}
\end{equation}
As explained in \cite{beukers-dwork}, the fact, that in the recursion
\eqref{eq:rec3-abcd} we divide by $(n + 1)^3$ at each step, means that we
should expect the denominator of $u_n$ to grow like $(n!)^3$. While this is
what happens for generic choice of the parameters $(a, b, c, d)$, the
Ap\'ery numbers have the, from this perspective, exceptional property of
being integral. Initiated by Beukers \cite{beukers-dwork}, systematic
searches have therefore been conducted for recurrences of this kind, which
share the property of having an integer solution with initial conditions
\eqref{eq:apery-rec0}. This was done by D.~Zagier \cite{zagier4} for
recurrences of the form
\begin{equation}
  (n + 1)^2 u_{n + 1} = (a n^2 + a n + b) u_n - c n^2 u_{n - 1},
  \label{eq:rec2-abc}
\end{equation}
by G.~Almkvist and W.~Zudilin \cite{az-de06} for recurrences of the form
\eqref{eq:rec3-abcd} with $d = 0$ and, more recently, by S.~Cooper
\cite{cooper-sporadic} for recurrences of the form \eqref{eq:rec3-abcd}. In
each case, apart from degenerate cases, only finitely many sequences have been
discovered. For details and a possibly complete list of the sequences, we
refer to \cite{zagier4}, \cite{az-de06}, \cite{asz-clausen},
\cite{cooper-sporadic}.

Remarkably, and still rather mysteriously, all of these sequences, often
referred to as {\emph{Ap\'ery-like}}, share some of the interesting
properties of the Ap\'ery numbers. For instance, they all are the
coefficients of modular forms expanded in terms of a corresponding modular
function. In the case of the Ap\'ery numbers $A (n)$, for instance, it was
shown by Beukers \cite{beukers-apery87} that
\begin{equation}
  \sum_{n \geq 0} A (n)  \left( \frac{\eta (\tau) \eta (6 \tau)}{\eta (2
  \tau) \eta (3 \tau)} \right)^{12 n} = \frac{\eta^7 (2 \tau) \eta^7 (3
  \tau)}{\eta^5 (\tau) \eta^5 (6 \tau)}, \label{eq:apery-mp}
\end{equation}
where $\eta (\tau)$ is the Dedekind eta function $\eta ( \tau) = e^{\pi i \tau
/ 12} \prod_{n \geq 1} (1 - e^{2 \pi i n \tau})$. The modular function
and the modular form appearing in \eqref{eq:apery-mp} are modular with respect
to the congruence subgroup $\Gamma_0 (6)$ of level $6$ (in fact, they are
modular with respect to a slightly larger group). While this relation with
modular forms can be proven in each individual case, no conceptual explanation
is available in the sense that if an additional Ap\'ery-like sequence was
found we would not know \textit{a priori} that its generating function has a
modular parametrization such as \eqref{eq:apery-mp}.

As a second example, it is conjectured and in some cases proven
\cite{oss-sporadic} that each Ap\'ery-like sequence satisfies a
supercongruence of the form \eqref{eq:apery-sc}. Again, no uniform explanation
is available and, the known proofs \cite{gessel-super},
\cite{mimura-apery}, \cite{beukers-apery85}, \cite{coster-sc} of the
supercongruences \eqref{eq:apery-sc} and \eqref{eq:apery-sc1} all rely on the
explicit binomial representation \eqref{eq:apery} of the Ap\'ery numbers.
However, not all Ap\'ery-like sequences have a comparably effective binomial
representation so that, for instance, for the {\emph{Almkvist--Zudilin
numbers}} \cite[sequence (4.12){\hspace{0.25em}}($\delta$)]{asz-clausen},
\cite{cz-apery}, \cite{ccs-apery}
\begin{equation}
  Z (n) = \sum_{k = 0}^n ( - 3)^{n - 3 k} \binom{n}{3 k} \binom{n + k}{n}
  \frac{(3 k) !}{k!^3}, \label{eq:az}
\end{equation}
which solve \eqref{eq:rec3-abcd} with $(a, b, c, d) = (- 7, - 3, 81, 0)$, the
supercongruence
\begin{equation}
  Z (p^r m) \equiv Z (p^{r - 1} m) \pmod{p^{3 r}}
  \label{eq:az-sc}
\end{equation}
for primes $p \geq 3$ is conjectural only.

It would therefore be of particular interest to find alternative approaches to
proving supercongruences. In this paper, we provide a new perspective on
supercongruences of the form \eqref{eq:az-sc} by showing that they hold, at
least for several Ap\'ery-like sequences, for all coefficients $C (
\boldsymbol{n})$ of a corresponding rational function, which has the sequence of
interest as its diagonal coefficients. In such a case, one may then hope to
use properties of the rational function to prove, for some $k > 1$, the
supercongruence $C ( p^r \boldsymbol{n}) \equiv C ( p^{r - 1} \boldsymbol{n})$
modulo $p^{k r}$. For instance, for fixed $p^r$, these congruences can be
proved, at least in principle, by computing the multivariate generating
functions of both $C ( p^r \boldsymbol{n})$ and $C ( p^{r - 1} \boldsymbol{n})$,
which are rational functions because they are multisections of a rational
function, and comparing them modulo $p^{k r}$.

Let us note that, in Example~\ref{eg:az} below, we give a characterization of
the Almkvist--Zudilin numbers \eqref{eq:az} as the diagonal of a surprisingly
simple rational function and conjecture that the supercongruences
\eqref{eq:az-sc}, which themselves have not been proved yet, again extend to
all coefficients of this rational function. We hope that the simplicity of the
rational function might help inspire a proof of these supercongruences.

\section{Main result and examples}\label{sec:main}

We now generalize what we have illustrated in the introduction for the
Ap\'ery numbers $A (n)$ to an infinite family of sequences $A_{\lambda,
\varepsilon} (n)$, indexed by partitions $\lambda$ and $\varepsilon \in \{ -
1, 1 \}$, which includes other Ap\'ery-like numbers such as the Franel and
Yang--Zudilin numbers as well as the sequence used by Ap\'ery in relation
with $\zeta (2)$. Our main theorem is Theorem~\ref{thm:aperyx-sc}, in which we
prove (multivariate) supercongruences for this family of sequences, thus
unifying and extending a number of known supercongruences. To begin with, the
sequences we are concerned with are introduced by the following extension of
formula \eqref{eq:apery4x}. Here, $\boldsymbol{x}^{\boldsymbol{n}}$ is short for
$x_1^{n_1} x_2^{n_2} \cdots x_d^{n_d}$.

\begin{theorem}
  \label{thm:aperyx-rf}Let $\alpha \in \mathbb{C}$ and $\lambda = (\lambda_1,
  \ldots, \lambda_{\ell}) \in \mathbb{Z}_{> 0}^{\ell}$ with $d = \lambda_1 +
  \ldots + \lambda_{\ell}$, and set $s (j) = \lambda_1 + \ldots + \lambda_{j -
  1}$. Then the Taylor coefficients of the rational function
  \begin{equation}
    \left( \prod_{j = 1}^{\ell} \left[ 1 - \sum_{r = 1}^{\lambda_j} x_{s (j) +
    r} \right] - \alpha x_1 x_2 \cdots x_d \right)^{- 1} = \sum_{\boldsymbol{n}
    \in \mathbb{Z}_{\geq 0}^d} A_{\lambda, \alpha} (\boldsymbol{n})
    \boldsymbol{x}^{\boldsymbol{n}} \label{eq:aperyx-rf}
  \end{equation}
  are given by
  \begin{equation}
    A_{\lambda, \alpha} (\boldsymbol{n}) = \sum_{k \in \mathbb{Z}} \alpha^k
    \prod_{j = 1}^{\ell} \binom{n_{s (j) + 1} + \ldots +_{} n_{s (j) +
    \lambda_j} - (\lambda_j - 1) k}{n_{s (j) + 1} - k, \ldots, n_{s (j) +
    \lambda_j} - k, k} . \label{eq:Alambda}
  \end{equation}
\end{theorem}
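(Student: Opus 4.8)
The plan is to prove the coefficient formula by extracting the Taylor coefficients of the rational function directly, using the multinomial/geometric series expansion of the denominator. Write the denominator as $1 - u$ where $u = 1 - \prod_{j=1}^{\ell}\bigl[1 - \sum_{r=1}^{\lambda_j} x_{s(j)+r}\bigr] + \alpha x_1 \cdots x_d$, so that the rational function equals $\sum_{k \geq 0} u^k$; however, a cleaner route is to separate the product part from the $\alpha x_1 \cdots x_d$ part. Specifically, I would write
\begin{equation*}
  \left( P(\boldsymbol{x}) - \alpha x_1 \cdots x_d \right)^{-1} = \frac{1}{P(\boldsymbol{x})} \cdot \frac{1}{1 - \alpha x_1 \cdots x_d / P(\boldsymbol{x})} = \sum_{k \geq 0} \alpha^k (x_1 \cdots x_d)^k P(\boldsymbol{x})^{-(k+1)},
\end{equation*}
where $P(\boldsymbol{x}) = \prod_{j=1}^{\ell}\bigl[1 - \sum_{r=1}^{\lambda_j} x_{s(j)+r}\bigr]$ factors over disjoint blocks of variables.

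The key simplification is that $P(\boldsymbol{x})^{-(k+1)}$ factors as a product over the $\ell$ blocks, since each factor involves a disjoint set of variables. For the $j$-th block, I would expand $\bigl[1 - \sum_{r=1}^{\lambda_j} x_{s(j)+r}\bigr]^{-(k+1)}$ using the multinomial series
\begin{equation*}
  (1 - y_1 - \cdots - y_{\lambda_j})^{-(k+1)} = \sum_{m_1, \ldots, m_{\lambda_j} \geq 0} \binom{m_1 + \cdots + m_{\lambda_j} + k}{m_1, \ldots, m_{\lambda_j}, k} y_1^{m_1} \cdots y_{\lambda_j}^{m_{\lambda_j}}.
\end{equation*}
The factor $(x_1 \cdots x_d)^k$ shifts every exponent by $k$, so to read off the coefficient of $\boldsymbol{x}^{\boldsymbol{n}}$ I set $m_{s(j)+r} = n_{s(j)+r} - k$ for each $r$ within block $j$. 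Then $m_1 + \cdots + m_{\lambda_j} = (n_{s(j)+1} + \cdots + n_{s(j)+\lambda_j}) - \lambda_j k$, and adding back the $k$ in the upper multinomial entry yields the numerator $n_{s(j)+1} + \cdots + n_{s(j)+\lambda_j} - (\lambda_j - 1)k$ appearing in \eqref{eq:Alambda}. Multiplying across the $\ell$ blocks and summing over $k$ reproduces the claimed formula exactly.

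The main obstacle is bookkeeping rather than conceptual: one must verify that the index shift is consistent, that the multinomial coefficient in the $j$-th block has lower entries $n_{s(j)+1} - k, \ldots, n_{s(j)+\lambda_j} - k, k$ matching the partition $m_1, \ldots, m_{\lambda_j}, k$, and that summing $k$ over $\mathbb{Z}$ (rather than $k \geq 0$) introduces no spurious terms. The latter holds because the multinomial coefficients vanish whenever any lower entry is negative under the standard convention; for $k < 0$ the upper entry $k$ (as the final slot) forces the term to vanish, and for $k$ large the entries $n_{s(j)+r} - k$ become negative, so only finitely many $k$ contribute. Thus extending the sum to all of $\mathbb{Z}$ in \eqref{eq:Alambda} is harmless and matches the convention used for the negation rule \eqref{eq:binom-neg}. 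Finally, I would remark that specializing $\ell = 2$, $\lambda = (2,2)$, and $\alpha = 1$ recovers \eqref{eq:apery4x}, providing an immediate consistency check.
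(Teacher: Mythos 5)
Your proposal is correct and follows essentially the same route as the paper: geometric expansion in $\alpha x_1\cdots x_d / P(\boldsymbol{x})$, factorization of $P(\boldsymbol{x})^{-(k+1)}$ over the disjoint variable blocks, and the multinomial series with the shift by $(x_1\cdots x_d)^k$ producing exactly the multinomial coefficient in \eqref{eq:Alambda}. The only nitpick is terminological: for $k<0$ it is the \emph{lower} entry $k$ of the multinomial coefficient (its final slot) that is negative and forces the term to vanish, not an upper entry.
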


The proof of this elementary but crucial result will be given in
Section~\ref{sec:coeffs}. Observe that the multivariate Ap\'ery numbers $A
(\boldsymbol{n})$, defined in \eqref{eq:apery4x}, are the special case $A_{(2,
2), 1} (\boldsymbol{n})$.

Our main result, of which Theorem~\ref{thm:apery4-sc} is the special case
$\lambda = (2, 2)$ and $\varepsilon = 1$, follows next. Note that, if
$\boldsymbol{n} \in \mathbb{Z}_{\geq 0}^d$, then the sum
\eqref{eq:Alambda} defining $A_{\lambda, \alpha} (\boldsymbol{n})$ is finite and
runs over $k = 0, 1, \ldots, \min (n_1, \ldots, n_d)$. On the other hand, if
$\max (\lambda_1, \ldots, \lambda_{\ell}) \geq 2$, then $A_{\lambda,
\alpha} (\boldsymbol{n})$ is finite for any $\boldsymbol{n} \in \mathbb{Z}^d$.

\begin{theorem}
  \label{thm:aperyx-sc}Let $\varepsilon \in \{ - 1, 1 \}$, $\lambda =
  (\lambda_1, \ldots, \lambda_{\ell}) \in \mathbb{Z}_{> 0}^{\ell}$ and assume
  that $\boldsymbol{n} \in \mathbb{Z}^d$, $d = \lambda_1 + \ldots +
  \lambda_{\ell}$, is such that $A_{\lambda, \varepsilon} (\boldsymbol{n})$, as
  defined in \eqref{eq:Alambda}, is finite.
  \begin{enumeratealpha}
    \item \label{thm:aperyx-sc2}If $\ell \geq 2$, then, for all primes $p
    \geq 3$ and integers $r \geq 1$,
    \begin{equation}
      A_{\lambda, \varepsilon} (p^r \boldsymbol{n}) \equiv A_{\lambda,
      \varepsilon} (p^{r - 1} \boldsymbol{n}) \pmod{p^{2 r}}
      . \label{eq:aperyx-sc2}
    \end{equation}
    If $\varepsilon = 1$, then these congruences also hold for $p = 2$.
    
    \item \label{thm:aperyx-sc3}If $\ell \geq 2$ and $\max (\lambda_1,
    \ldots, \lambda_{\ell}) \leq 2$, then, for primes $p \geq 5$ and
    integers $r \geq 1$,
    \begin{equation}
      A_{\lambda, \varepsilon} (p^r \boldsymbol{n}) \equiv A_{\lambda,
      \varepsilon} (p^{r - 1} \boldsymbol{n}) \pmod{p^{3 r}}
      . \label{eq:aperyx-sc}
    \end{equation}
  \end{enumeratealpha}
\end{theorem}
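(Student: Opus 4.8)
The plan is to argue directly from the closed form \eqref{eq:Alambda}, adapting the strategy behind the classical proofs of \eqref{eq:apery-sc} and \eqref{eq:apery-sc1} but organising everything blockwise so that a single argument covers every partition $\lambda$. Throughout, negative entries of $\boldsymbol n$ are handled by reading the binomial coefficients through the gamma-function definition and the negation rule \eqref{eq:binom-neg} of Remark~\ref{rk:apery-shifted}, under which the finiteness hypothesis guarantees that all sums are finite.

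First I would record an elementary factorisation of each block multinomial coefficient into ordinary binomials. Writing $M$ for its top entry and $a_i = p^r n_{s(j)+i}-k$, the identity $\binom{M}{a_1,\dots,a_{\lambda_j},k}=\binom{a_1+k}{k}\binom{M}{a_1+k}\binom{M-a_1-k}{a_2,\dots,a_{\lambda_j}}$ exhibits the factor $\binom{a_1+k}{k}=\binom{p^r n_{s(j)+1}}{k}$. This reduces everything to two classical tools for binomial coefficients: Kummer's theorem, which reads off the $p$-adic valuation by counting base-$p$ carries, and Jacobsthal's congruence $\binom{pa}{pb}\equiv\binom{a}{b}\pmod{p^{3+v_p(ab(a-b))}}$ for $p\geq5$ (together with the weaker $\binom{pa}{pb}\equiv\binom{a}{b}\pmod{p^2}$, valid for all $p$), used in the sharp form that tracks the valuations of $a$ and $b$.

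Writing $A_{\lambda,\varepsilon}(p^r\boldsymbol n)=\sum_k\varepsilon^k\prod_j B_j^{(r)}(k)$, I would split the sum according to whether $p\mid k$. In the part with $p\mid k$ I substitute $k=pk'$; each binomial factor then takes the shape $\binom{pa}{pb}$ with one of its entries divisible by $p^{r-1}$ whenever it carries an entry of the form $p^{r}n_i$, so that Jacobsthal's exponent $3+v_p(ab(a-b))$ exceeds $r$ and identifies $\prod_j B_j^{(r)}(pk')$ with the level-$(r-1)$ product $\prod_j B_j^{(r-1)}(k')$ to high order. Combined with the valuation $2(r-1)$ carried by the latter product (one factor $\binom{p^{r-1}n_{s(j)+1}}{k'}$ per block), and using $\varepsilon^{pk'}=\varepsilon^{k'}$ for $p$ odd — and for $\varepsilon=1$ also at $p=2$, which is exactly the extra case allowed in \ref{thm:aperyx-sc2} — this part equals $A_{\lambda,\varepsilon}(p^{r-1}\boldsymbol n)$ modulo $p^{2r}$ in general, and modulo $p^{3r}$ when $\max(\lambda_1,\dots,\lambda_\ell)\leq2$, since then every factor of every block has one entry divisible by $p^{r-1}$; a block of size $\geq3$ produces factors with both entries coprime to $p$, whose Jacobsthal exponent stays at $3$ and caps the gain, which is the reason for the hypothesis of \ref{thm:aperyx-sc3}. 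In the part with $p\nmid k$ the factor $\binom{p^r n_{s(j)+1}}{k}$ has valuation at least $r$ by Kummer, so each of the $\ell\geq2$ blocks contributes at least $r$ and the whole term is divisible by $p^{2r}$; this disposes of the $p\nmid k$ sum for \eqref{eq:aperyx-sc2}.

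The remaining, and I expect principal, difficulty is the sharper claim \eqref{eq:aperyx-sc}: when $\max\lambda_j\leq2$ the $p\nmid k$ sum must vanish modulo $p^{3r}$, yet two blocks only guarantee valuation $2r$, and there are terms attaining this bound. The missing power of $p$ is therefore invisible term by term and must come from cancellation in the sum over $k$, exactly as in Beukers' treatment of the Ap\'ery off-diagonal. The plan is to adapt that pairing: group the indices $k$ with $p\nmid k$ and use the second factor $\binom{M}{a_1+k}$ of each block, together with Jacobsthal modulo $p^3$ (which forces $p\geq5$), to match the grouped contributions in pairs that cancel modulo $p^{3r}$. Establishing this cancellation uniformly across all partitions with $\max\lambda_j\leq2$, and checking that the sharp-Jacobsthal bookkeeping of the $p\mid k$ part holds for every $p$-adic valuation of $k'$, are the steps I expect to demand the most care.
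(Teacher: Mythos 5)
Your outline matches the paper's strategy for part (a) and for the reduction of the $p \mid k$ terms: split $A_{\lambda,\varepsilon}(p^r\boldsymbol{n})$ according to the $p$-adic valuation of $k$, use Jacobsthal's congruence (in the sharp form tracking the valuations of the entries, and extended to negative entries) to identify $A_{\lambda}(p^r\boldsymbol{n};pk)$ with $A_{\lambda}(p^{r-1}\boldsymbol{n};k)$ to the required order, and observe that the $p\nmid k$ terms are divisible by $p^{\ell r}$ because each block carries a factor $\binom{p^r n_{s(j)+1}}{k}$. That much is sound and is essentially Lemma~\ref{lem:Alambdak} together with the first half of the paper's proof of Theorem~\ref{thm:aperyx-sc}.

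The gap is exactly where you flag the ``principal difficulty'': showing that the $p\nmid k$ part vanishes modulo $p^{3r}$ in case (b). Your proposed mechanism --- grouping the indices $k$ with $p\nmid k$ and ``matching the grouped contributions in pairs that cancel'' --- is not how the cancellation arises, and I do not see how a pairing could produce it: the individual terms attain valuation exactly $2r$ and do not cancel two at a time; the extra factor $p^r$ is a global phenomenon over a reduced residue system. The actual argument extracts $k^2$ from the two binomials $\binom{p^r n_1}{k}\binom{p^r n_{1+\lambda_1}}{k}$, writing the $p\nmid k$ sum as $p^{2r}n_1 n_{1+\lambda_1}\sum_{p\nmid k}(\varepsilon^k/k^2)\,C(p^r\boldsymbol{n};k-1)$ with $C$ integral, and then requires three ingredients your plan does not supply: (i) the Wolstenholme--Glaisher-type congruence $\sum_{k=1,\,p\nmid k}^{p^s-1}\varepsilon^k/k^2\equiv 0\pmod{p^s}$ for $p\geq 5$ (Lemma~\ref{lem:powersummod}; this, not Jacobsthal, is what forces $p\geq5$ in this step); (ii) the reduction congruence $C(p^r\boldsymbol{n};k)\equiv C(p^{r-1}\boldsymbol{n};[k/p])\pmod{p^r}$ for the cofactor, which for negative entries of $\boldsymbol{n}$ and negative $k$ needs the extended congruences of Lemmas~\ref{lem:binomind} and~\ref{lem:binomind2}; and (iii) an inductive summation lemma (Lemma~\ref{lem:sumCind}, generalizing Beukers' Lemma 3) combining (i) and (ii) to conclude $\sum_{p\nmid k}(\varepsilon^k/k^2)\,C(p^r\boldsymbol{n};k)\equiv0\pmod{p^r}$. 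Without these, part (b) --- the heart of the theorem --- is not proved.
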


A proof of Theorem~\ref{thm:aperyx-sc} is given in Section~\ref{sec:sc}. One
of the novel features of the proof, which is based on the approach of Gessel
\cite{gessel-super} and Beukers \cite{beukers-apery85}, is that it
proceeds in a uniform fashion for all $\boldsymbol{n} \in \mathbb{Z}^d$. As
outlined in Remark~\ref{rk:apery-shifted}, this allows us to also conclude,
and to a certain extent explain, the shifted supercongruences
\eqref{eq:apery-sc1}, which, among Ap\'ery-like numbers, are special to the
Ap\'ery numbers as well as their version \eqref{eq:aperyB} related to $\zeta
( 2)$. In cases where $\boldsymbol{n}$ has negative entries, the summation
\eqref{eq:Alambda}, while still finite, may include negative values for $k$
(see Remark~\ref{rk:apery-shifted}). We therefore extend classical results,
such as Jacobsthal's binomial congruences, to the case of binomial
coefficients with negative entries.

\begin{example}
  For $\lambda = (2)$, the numbers \eqref{eq:Alambda} specialize to the
  {\emph{Delannoy numbers}}
  \begin{equation*}
    A_{(2), 1} (\boldsymbol{n}) = \sum_{k \in \mathbb{Z}} \binom{n_1}{k}
     \binom{n_1 + n_2 - k}{n_1},
  \end{equation*}
  which, for $n_1, n_2 \geq 0$, count the number of lattice paths from
  $(0, 0)$ to $(n_1, n_2)$ with steps $(1, 0)$, $(0, 1)$ and $(1, 1)$. The
  Delannoy numbers do not satisfy \eqref{eq:aperyx-sc2} or
  \eqref{eq:aperyx-sc}, thus demonstrating the necessity of the condition
  $\ell \geq 2$ in Theorem~\ref{thm:aperyx-sc}. They do satisfy
  \eqref{eq:aperyx-sc2} modulo $p^r$, by virtue of Remark~\ref{rk:dwork}.
\end{example}

\begin{example}
  The Ap\'ery-like sequence
  \begin{equation}
    B (n) = \sum_{k \in \mathbb{Z}} \binom{n}{k}^2 \binom{n + k}{k},
    \label{eq:aperyB}
  \end{equation}
  which satisfies recurrence \eqref{eq:rec2-abc} with $(a, b, c) = (11, 3, -
  1)$, was introduced by Ap\'ery \cite{apery}, \cite{alf} along with
  \eqref{eq:apery} and used to (re)prove the irrationality of $\zeta (2)$. By
  Theorem~\ref{thm:aperyx-rf} with $\lambda = (2, 1)$ and $\varepsilon = 1$,
  the numbers $B (n)$ are the diagonal coefficients of the rational function
  \begin{equation}
    \frac{1}{(1 - x_1 - x_2) (1 - x_3) - x_1 x_2 x_3} = \sum_{\boldsymbol{n} \in
    \mathbb{Z}_{\geq 0}^3} B (\boldsymbol{n}) \boldsymbol{x}^{\boldsymbol{n}}
    . \label{eq:aperyB-rf}
  \end{equation}
  In addition to the binomial sum for $B ( \boldsymbol{n})$ provided by
  Theorem~\ref{thm:aperyx-rf}, MacMahon's Master Theorem~\ref{thm:macmahon}
  shows that $B (n_1, n_2, n_3)$ is the coefficient of $x_1^{n_1} x_2^{n_2}
  x_3^{n_3}$ in the product $(x_1 + x_2 + x_3)^{n_1} (x_1 + x_2)^{n_2} (x_2 +
  x_3)^{n_3}$. An application of Theorem~\ref{thm:aperyx-sc} shows that, for
  $\boldsymbol{n} \in \mathbb{Z}^3$ and integers $r \geq 1$, the
  supercongruences
  \begin{equation}
    B (p^r \boldsymbol{n}) \equiv B (p^{r - 1} \boldsymbol{n}) \pmod{p^{3 r}} \label{eq:aperyB-scr}
  \end{equation}
  hold for all primes $p \geq 5$. In the diagonal case $n_1 = n_2 = n_3$,
  this result was first proved by Coster \cite{coster-sc}.
  
  Proceeding as in Remark~\ref{rk:apery-shifted}, and using the curious
  identity
  \begin{equation}
    \sum_{k = 0}^n \binom{n}{k}^2 \binom{n + k}{k} = \sum_{k = 0}^n ( - 1)^{n
    + k} \binom{n}{k} \binom{n + k}{k}^2,
  \end{equation}
  we find that $B (- n) = ( - 1)^{n - 1} B (n - 1)$ for $n > 0$. Consequently,
  \eqref{eq:aperyB-scr} implies the shifted supercongruences $B (p^r m - 1)
  \equiv B (p^{r - 1} m - 1)$, which hold modulo $p^{3 r}$ for all primes $p
  \geq 5$ and were first proved in \cite{beukers-apery85}, along with
  \eqref{eq:apery-sc1}. We observe that, among the known Ap\'ery-like
  numbers, the sequence $B ( n)$ and the Ap\'ery numbers \eqref{eq:apery}
  are the only ones to satisfy shifted supercongruences of the form
  \eqref{eq:apery-sc1} in addition to the supercongruences of the form
  \eqref{eq:apery-sc}.
\end{example}

\begin{example}
  As a consequence of Theorem~\ref{thm:aperyx-rf} with $\lambda = (3, 1)$ and
  $\varepsilon = 1$, the numbers
  \begin{equation*}
    C (n) = \sum_{k = 0}^n \binom{n}{k}^2 \binom{n + k}{k} \binom{n + 2 k}{k}
  \end{equation*}
  are the diagonal coefficients of the rational function $1 / ((1 - x_1 - x_2
  - x_3) (1 - x_4) - x_1 x_2 x_3 x_4)$. By Theorem~\ref{thm:aperyx-sc}, it
  follows that $C (p^r n) \equiv C (p^{r - 1} n)$ modulo $p^{2 r}$, for all
  primes $p$. We note that this congruence does not, in general, hold modulo a
  larger power of $p$, as is illustrated by $C (5) = 4, 009, 657 \nequiv 7 = C
  (1)$ modulo $5^3$. This demonstrates that in
  Theorem~\ref{thm:aperyx-sc}\ref{thm:aperyx-sc2} the modulus $p^{2 r}$ of the
  congruences cannot, in general, be replaced with $p^{3 r}$, even for $p
  \geq 5$.
\end{example}

\begin{example}
  Next, we consider the sequences
  \begin{equation}
    Y_d (n) = \sum_{k = 0}^n \binom{n}{k}^d . \label{eq:franelx}
  \end{equation}
  The numbers $Y_3 (n)$ satisfy the recurrence \eqref{eq:rec2-abc} with $(a,
  b, c) = (7, 2, - 8)$ and are known as {\emph{Franel numbers}}
  \cite{franel94}, while the numbers $Y_4 (n)$, corresponding to $(a, b, c,
  d) = (6, 2, - 64, 4)$ in \eqref{eq:rec3-abcd}, are sometimes referred to as
  {\emph{Yang--Zudilin numbers}} \cite{ccs-apery}. It follows from
  Theorem~\ref{thm:aperyx-rf} with $\lambda = (1, 1, \ldots, 1)$ and
  $\varepsilon = 1$, that
  \begin{equation}
    \frac{1}{(1 - x_1) (1 - x_2) \cdots (1 - x_d) - x_1 x_2 \cdots x_d} =
    \sum_{\boldsymbol{n} \in \mathbb{Z}_{\geq 0}^d} Y_d (\boldsymbol{n})
    \boldsymbol{x}^{\boldsymbol{n}}, \label{eq:franelx-rf}
  \end{equation}
  where
  \begin{equation}
    Y_d (\boldsymbol{n}) = \sum_{k \geq 0} \binom{n_1}{k} \binom{n_2}{k}
    \cdots \binom{n_d}{k} . \label{eq:franelxx}
  \end{equation}
  It is proved in \cite{ccs-apery} that $Y_d (p n) \equiv Y_d (n)$ modulo
  $p^3$ for primes $p \geq 5$ if $d \geq 2$. These congruences are
  generalized to the multivariate setting by Theorem~\ref{thm:aperyx-sc},
  which shows that, if $d \geq 2$, then, for $\boldsymbol{n} \in
  \mathbb{Z}_{\geq 0}^d$ and integers $r \geq 1$,
  \begin{equation}
    Y_d (p^r \boldsymbol{n}) \equiv Y_d (p^{r - 1} \boldsymbol{n}) \pmod{p^{3 r}} \label{eq:franelx-scr}
  \end{equation}
  for primes $p \geq 5$. Note that
  \begin{equation*}
    Y_2 (\boldsymbol{n}) = \sum_{k \in \mathbb{Z}} \binom{n_1}{k}
     \binom{n_2}{k} = \binom{n_1 + n_2}{n_1} .
  \end{equation*}
  Hence, congruence \eqref{eq:franelx-scr} includes, in particular, the
  appealing binomial congruence
  \begin{equation*}
    \binom{p a}{p b} \equiv \binom{a}{b} \pmod{p^3},
  \end{equation*}
  which is attributed to W.~Ljunggren \cite{granville-bin97} and which
  generalizes the classical congruences by C.~Babbage, J.~Wolstenholme and
  J.~W.~L.~Glaisher. It is further refined by E.~Jacobsthal's binomial
  congruence, which we review in Lemma~\ref{lem:jacobsthal} and which the
  proof of Theorem~\ref{thm:aperyx-sc} crucially depends on.
\end{example}

Let us conclude this section with two conjectural examples, which suggest that
our results are not an isolated phenomenon.

\begin{example}
  As noted in the introduction for the Ap\'ery numbers, there is no unique
  rational function of which a given sequence is the diagonal. For instance,
  the Franel numbers $Y_3 (n)$ are also the diagonal coefficients of the
  rational function
  \begin{equation}
    \frac{1}{1 - (x_1 + x_2 + x_3) + 4 x_1 x_2 x_3} . \label{eq:AG3}
  \end{equation}
  A rational function $F (\boldsymbol{x})$ is said to be {\emph{positive}} if
  its Taylor coefficients \eqref{eq:taylor} are all positive. The
  Askey--Gasper rational function \eqref{eq:AG3}, whose positivity is proved
  in \cite{askey-pos77} and \cite{zb-pos-el83}, is an interesting instance
  of a rational function on the boundary of positivity (if the $4$ is replaced
  by $4 + \varepsilon$, for any $\varepsilon > 0$, then the resulting rational
  function is not positive). The present work was, in part, motivated by the
  observation \cite{sz-pos} that for several of the rational functions,
  which have been shown or conjectured to be on the boundary of positivity,
  the diagonal coefficients are arithmetically interesting sequences with
  links to modular forms. Note that the Askey--Gasper rational function
  \eqref{eq:AG3} corresponds to the choice $\lambda = ( 3)$ and $\alpha = - 4$
  in Theorem~\ref{thm:aperyx-rf}, which makes its Taylor coefficients $G (
  \boldsymbol{n}) = A_{( 3), - 4} ( \boldsymbol{n})$ explicit. We also note that
  an application of MacMahon's Master Theorem~\ref{thm:macmahon} shows that $G
  (n_1, n_2, n_3)$ is the coefficient of $x_1^{n_1} x_2^{n_2} x_3^{n_3}$ in
  the product $(x_1 - x_2 - x_3)^{n_1} (x_2 - x_1 - x_3)^{n_2} (x_3 - x_1 -
  x_2)^{n_3}$. Although it is unclear how one might adjust the proof of
  Theorem~\ref{thm:aperyx-sc}, numerical evidence suggests that the
  coefficients $G (\boldsymbol{n})$ satisfy supercongruences modulo $p^{3 r}$ as
  well.
\end{example}

\begin{conjecture}
  The coefficients $G (\boldsymbol{n})$ of the rational function \eqref{eq:AG3}
  satisfy, for primes $p \geq 5$ and integers $r \geq 1$,
  \begin{equation*}
    G (p^r \boldsymbol{n}) \equiv G (p^{r - 1} \boldsymbol{n}) \pmod{p^{3 r}} .
  \end{equation*}
\end{conjecture}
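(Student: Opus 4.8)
The plan is to mirror the strategy that proves Theorem~\ref{thm:aperyx-sc}, adapting it to the coefficients $G(\boldsymbol{n}) = A_{(3),-4}(\boldsymbol{n})$. By Theorem~\ref{thm:aperyx-rf} with $\lambda = (3)$ and $\alpha = -4$, these coefficients are given explicitly by
\begin{equation*}
  G(\boldsymbol{n}) = \sum_{k \in \mathbb{Z}} (-4)^k \binom{n_1 + n_2 + n_3 - 2k}{n_1 - k, \; n_2 - k, \; n_3 - k, \; k},
\end{equation*}
a single-variable sum of signed multinomial coefficients with a geometric-type weight $(-4)^k$. Following the Gessel--Beukers approach that underlies the main theorem, the first step is to understand how each multinomial factor behaves under the substitution $\boldsymbol{n} \mapsto p^r \boldsymbol{n}$. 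I would split the summation index as $k = p^{r-1} j + t$ and isolate the terms where $p^{r-1} \mid k$, which should contribute the ``main'' sum matching $G(p^{r-1}\boldsymbol{n})$, from the remaining terms, which must be shown to vanish modulo $p^{3r}$.

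The key technical input is Jacobsthal's binomial congruence (Lemma~\ref{lem:jacobsthal} in the paper), which controls quotients of binomial coefficients of the form $\binom{pa}{pb}/\binom{a}{b}$ modulo high powers of $p$. I would first express each multinomial coefficient as a product of ordinary binomial coefficients via the telescoping identity
\begin{equation*}
  \binom{m}{a_1, a_2, a_3, a_4} = \binom{m}{a_1}\binom{m - a_1}{a_2}\binom{m - a_1 - a_2}{a_3},
\end{equation*}
then apply Jacobsthal's congruence to each factor when the upper and lower entries are simultaneously divisible by $p$. Because the condition $\max(\lambda_i) \leq 2$ in Theorem~\ref{thm:aperyx-sc}\ref{thm:aperyx-sc3} is exactly what guarantees the modulus $p^{3r}$ there, and here $\lambda = (3)$ with $\max(\lambda_i) = 3$, the formal machinery predicts only a $p^{2r}$ modulus. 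The crux is therefore to exhibit an additional power of $p$ coming from the interaction between the weight $(-4)^k$ and the signed structure of the multinomial, so that the off-diagonal terms gain one extra factor of $p$ beyond what the general argument supplies.

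The hard part will be precisely this gain of the third factor of $p$. In the proof of the main theorem the value $\alpha = \varepsilon = \pm 1$ is essential: it keeps the weight $\varepsilon^k$ a root of unity and allows the cancellations to be read off directly from Jacobsthal's congruence applied to each multinomial block. With $\alpha = -4$ the weight $(-4)^{p^{r-1}j}$ no longer reduces trivially modulo $p^{3r}$, and one must track its $p$-adic behavior carefully; for $p \geq 5$ one has $-4 \equiv (-2)^2$ and $v_p(-4) = 0$, so the extra divisibility cannot come from the weight alone but must arise from a congruence relating $(-4)^{p^{r-1}j}$ to $(-4)^{j}$ modulo a suitable power of $p$, combined with a symmetry among the three equal ``slots'' of $\lambda = (3)$. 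I expect that establishing such an auxiliary congruence for the twisted weight, and showing it conspires with the Jacobsthal estimates to raise the modulus from $p^{2r}$ to $p^{3r}$, is the genuine obstacle — and it is the point at which, as the paper remarks, ``it is unclear how one might adjust the proof,'' so some new idea beyond a direct transcription of the $\lambda = (2,2)$ argument will likely be required.
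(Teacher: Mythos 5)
The statement you are trying to prove is one of the paper's \emph{conjectures}: the author explicitly writes that ``it is unclear how one might adjust the proof of Theorem~\ref{thm:aperyx-sc}'' and offers only numerical evidence. Your proposal is therefore not being measured against a proof in the paper, and, as written, it is not a proof either: it is a plan that ends by conceding that the decisive step --- extracting the extra $p$-adic valuation from the interaction of the weight $(-4)^k$ with the multinomial structure --- is ``the genuine obstacle'' for which ``some new idea'' is required. That concession is exactly the gap; nothing in the proposal closes it.

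Beyond that, the proposal understates how far the existing machinery falls short. For $\lambda=(3)$ one has $\ell=1$, so neither part of Theorem~\ref{thm:aperyx-sc} applies: the hypothesis $\ell\geq 2$ is not a technicality but is shown to be necessary by the Delannoy example ($\lambda=(2)$), and the divisibility $p^{\ell r}\mid A_{\lambda}(p^r\boldsymbol{n};k)$ for $p\nmid k$, which is what kills the term $G_0(p^r\boldsymbol{n})$ in the paper's proof, yields only a single factor $p^{r}$ here. So the ``formal machinery'' predicts only the Dwork-type congruence modulo $p^{r}$ (Remark~\ref{rk:dwork}), not $p^{2r}$ as you claim, and you must gain \emph{two} extra powers of $p$, not one. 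In addition, two of the key lemmas break outright for $\alpha=-4$: the step $\varepsilon^{p^s k}=\varepsilon^{p^{s-1}k}$ used to match $G_s(p^r\boldsymbol{n})$ with $G_{s-1}(p^{r-1}\boldsymbol{n})$, and Lemma~\ref{lem:powersummod}, whose proof relies on $\varepsilon^{\alpha k}=\varepsilon^{k}$ for odd $\alpha$ and hence on $\varepsilon\in\{-1,1\}$. A correct proof would have to replace both, and the lifting-the-exponent type congruence $(-4)^{p^s}\equiv(-4)^{p^{s-1}}\pmod{p^{s}}$ that you allude to is far too weak for a modulus of $p^{3r}$. In short: the approach of transcribing the Gessel--Beukers argument is the natural first thing to try, but the proposal itself documents why it fails rather than how to fix it, so the conjecture remains open.
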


\begin{example}
  \label{eg:az}Remarkably, the previous example has a four-variable analog,
  which involves the Almkvist--Zudilin numbers $Z ( n)$, introduced in
  \eqref{eq:az}. Namely, the numbers $Z ( n)$ are the diagonal coefficients of
  the unexpectedly simple rational function
  \begin{equation}
    \frac{1}{1 - ( x_1 + x_2 + x_3 + x_4) + 27 x_1 x_2 x_3 x_4}
    \label{eq:az-rat},
  \end{equation}
  as can be deduced from Theorem~\ref{thm:aperyx-rf} with $\lambda = ( 4)$ and
  $\alpha = - 27$. Again, numerical evidence suggests that the coefficients $Z
  (\boldsymbol{n})$ of \eqref{eq:az-rat} satisfy supercongruences modulo $p^{3
  r}$. This is particularly interesting, since even the univariate congruences
  \eqref{eq:az-sc} are conjectural at this time.
\end{example}

\begin{conjecture}
  The coefficients $Z (\boldsymbol{n})$ of the rational function
  \eqref{eq:az-rat} satisfy, for primes $p \geq 5$ and integers $r
  \geq 1$,
  \begin{equation*}
    Z (p^r \boldsymbol{n}) \equiv Z (p^{r - 1} \boldsymbol{n}) \pmod{p^{3 r}} .
  \end{equation*}
\end{conjecture}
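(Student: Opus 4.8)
The statement falls just outside the scope of Theorem~\ref{thm:aperyx-sc}: by Example~\ref{eg:az} the numbers $Z(\boldsymbol{n})$ are the coefficients $A_{(4),-27}(\boldsymbol{n})$, so here $\ell=1$, $\lambda_1=4$, and, crucially, $\alpha=-27$ is not in $\{-1,1\}$. My plan is therefore to follow the Gessel--Beukers--Coster strategy underlying the proof of Theorem~\ref{thm:aperyx-sc}, isolating precisely the new feature---the non-unit weight---that the $\varepsilon=\pm1$ argument never has to confront. Starting from the explicit multinomial sum of Theorem~\ref{thm:aperyx-rf},
\begin{equation*}
  Z(\boldsymbol{n}) = \sum_{k\in\mathbb{Z}} (-27)^k \binom{n_1+n_2+n_3+n_4-3k}{n_1-k,\,n_2-k,\,n_3-k,\,n_4-k,\,k},
\end{equation*}
I would split the sum for $Z(p^r\boldsymbol{n})$ according to whether $p\mid k$ or $p\nmid k$, the former giving the main term to be matched against $Z(p^{r-1}\boldsymbol{n})$ and the latter the error term.

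The error term is the benign part. Writing $v_p$ for the $p$-adic valuation, Kummer's theorem expresses $v_p$ of the five-part multinomial as the number of base-$p$ carries, and for $p\nmid k$ the four subtractions $p^r n_i-k$ each force a full block of $r$ carrying digits in the lowest $r$ positions. A short digit-sum computation, using $s_p(p^r-a)=r(p-1)-s_p(a-1)$, shows that the $r$-linear part of $v_p$ equals $(4-1)r=3r$, so every term with $p\nmid k$ is divisible by $p^{3r}$ and drops out. It is worth noting that the exponent $4-1$ is exactly the number of variables minus one; the same count for $\lambda=(3)$ would yield only $2r$, which already hints at why the companion Askey--Gasper conjecture is even more delicate.

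For the main term I would re-index $k=pk'$ and seek a multinomial Jacobsthal--Wolstenholme congruence, generalizing Lemma~\ref{lem:jacobsthal} from binomial to five-part multinomial coefficients, which for $r=1$ reads
\begin{equation*}
  \binom{pN-3pk'}{pn_1-pk',\ldots,pn_4-pk',\,pk'} \equiv \binom{N-k'}{n_1-k',\ldots,n_4-k',\,k'} \pmod{p^{3}},
\end{equation*}
where $N=n_1+n_2+n_3+n_4$, and whose iteration along the lines of Coster's induction on $r$ supplies the modulus $p^{3r}$. This reduces the multinomial part by one level. The difficulty is the weight: the level-$r$ term carries $(-27)^{pk'}$ while the level-$(r-1)$ term carries $(-27)^{k'}$, and these differ by the factor $(-27)^{(p-1)k'}$. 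Since $p\nmid 27$ for $p\geq5$, one only has $(-27)^{p-1}\equiv1\pmod p$, so $(-27)^{pk'}-(-27)^{k'}$ is divisible merely by $p^{1+v_p(k')}$---far short of $p^{3r}$. In the $\varepsilon=\pm1$ cases this mismatch simply does not occur, because $\varepsilon^{pk'}=\varepsilon^{k'}$ exactly for odd $p$, and this is the structural reason the present method delivers Theorem~\ref{thm:aperyx-sc} but stops short of the Almkvist--Zudilin numbers.

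The main obstacle is therefore to show that the weight-discrepancy sum $\sum_{k'}\bigl((-27)^{pk'}-(-27)^{k'}\bigr)\binom{pN-3pk'}{\ldots}$ vanishes modulo $p^{3r}$, which cannot be done term by term and must rely on global cancellation across $k'$. I would attempt to expand $(-27)^{(p-1)k'}=(1+p\,u)^{k'}$ with $u=((-27)^{p-1}-1)/p$ and organize the resulting sums so that each recombines into a shifted or differentiated instance of $Z$ that is itself controlled by the induction hypothesis, or alternatively exploit a generating-function identity for the multisection $\sum_{\boldsymbol{n}} Z(p^r\boldsymbol{n})\boldsymbol{x}^{\boldsymbol{n}}$, which is rational by Theorem~\ref{thm:aperyx-rf}. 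It is precisely this cancellation that I expect to be the crux, and the absence of any clean analogue of it in the unit-weight case is, to my mind, why the supercongruence for the Almkvist--Zudilin numbers has so far resisted proof.
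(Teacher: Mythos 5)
This statement is one of the paper's two concluding \emph{conjectures}: the paper offers no proof, only numerical evidence, and explicitly remarks that even the univariate congruence \eqref{eq:az-sc} is still open. So there is no proof of record to compare against. Your framing is accurate as far as it goes --- you correctly locate why Theorem~\ref{thm:aperyx-sc} does not apply (here $\ell=1$ and $\alpha=-27\notin\{-1,1\}$), and you candidly leave the weight-discrepancy cancellation for the main term as an unresolved crux. As written, the proposal is a research plan, not a proof.

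Beyond the admitted gap, the one step you present as settled is in fact false. You claim that every term with $p\nmid k$ is divisible by $p^{3r}$. Take $p=5$, $r=1$, $\boldsymbol{n}=(1,1,1,1)$: the $k=2$ term involves $\binom{14}{3,3,3,3,2}$, whose $5$-adic valuation is $v_5(14!)=2$, and the $k=4$ term involves $\binom{8}{1,1,1,1,4}=1680$, with $v_5=1$ only. The carry count in Kummer's theorem is not uniformly $3r$: adding four blocks congruent to $-k$ and one block equal to $k$ produces only about $\lfloor(4p^r-3k)/p^r\rfloor$ carries out of the low-order block, which drops below $3$ as soon as $3k>p^r$. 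Hence the $p\nmid k$ part cannot be discarded term by term; any divisibility it has modulo $p^{3r}$ (indeed even modulo $p^{2r}$) must come from cancellation across $k$ --- exactly the phenomenon that, in the cases the paper does handle, is supplied by Lemmas~\ref{lem:powersummod} and~\ref{lem:sumCind} rather than by valuations of individual summands. Both halves of your decomposition therefore remain open, which is consistent with the statement being left as a conjecture.
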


\begin{remark}
  The rational functions \eqref{eq:AG3} and \eqref{eq:az-rat} involved in the
  previous examples make it natural to wonder whether supercongruences might
  similarly exist for the family of rational functions given by
  \begin{equation*}
    \frac{1}{1 - ( x_1 + x_2 + \ldots + x_d) + ( d - 1)^{d - 1} x_1 x_2
     \cdots x_d} .
  \end{equation*}
  This does not, however, appear to be the case for $d \geq 5$. In fact,
  no value $b \neq 0$ in
  \begin{equation*}
    \frac{1}{1 - ( x_1 + x_2 + \ldots + x_d) + b x_1 x_2 \cdots x_d}
  \end{equation*}
  appears to give rise to supercongruences (by computing coefficients, we have
  ruled out supercongruences modulo $p^{2 r}$ for integers $| b | < 100, 000$
  and $d \leq 25$).
\end{remark}

\section{The Taylor coefficients}\label{sec:coeffs}

This section is devoted to proving Theorem~\ref{thm:aperyx-rf}. Before we give
a general proof, we offer an alternative approach based on MacMahon's Master
Theorem, to which we refer at several occasions in this note and which offers
additional insight into the Taylor coefficients by expressing them as
coefficients of certain polynomials (see also Remark~\ref{rk:dwork}). This
approach, which we apply here to prove formula \eqref{eq:apery4x}, is based on
the following result of P.~MacMahon \cite{macmahon-comb}, coined by himself
``a master theorem in the Theory of Permutations''. Here,
$[\boldsymbol{x}^{\boldsymbol{m}}]$ denotes the coefficient of $x_1^{m_1} \cdots
x_n^{m_n}$ in the expansion of what follows.

\begin{theorem}
  \label{thm:macmahon}For $\boldsymbol{x}= (x_1, \ldots, x_n)$, matrices $A \in
  \mathbb{C}^{n \times n}$ and $\boldsymbol{m}= (m_1, \ldots, m_n) \in
  \mathbb{Z}_{\geq 0}^n$,
  \begin{equation*}
    [\boldsymbol{x}^{\boldsymbol{m}}]  \prod_{i = 1}^n \left( \sum_{j = 1}^n
     A_{i, j} x_j  \right)^{m_i} = [\boldsymbol{x}^{\boldsymbol{m}}] 
     \frac{1}{\det ( I_n - AX)},
  \end{equation*}
  where $X$ is the diagonal $n \times n$ matrix with entries $x_1, \ldots,
  x_n$.
\end{theorem}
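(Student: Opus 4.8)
The plan is to deduce the theorem by comparing its two sides coefficient-by-coefficient through a single generating function in an auxiliary set of variables $\boldsymbol{t} = (t_1, \ldots, t_n)$. The starting point is Sylvester's determinant identity $\det (I_n - AX) = \det (I_n - XA)$. Since the matrix $XA$ has $(i,j)$ entry $x_i A_{ij}$, replacing $x_i$ by $t_i$ turns $XA$ into $TA$, where $T = \operatorname{diag} (t_1, \ldots, t_n)$; hence the polynomial $\det (I_n - TA)$ is obtained from $\det (I_n - AX)$ by the substitution $x_i \mapsto t_i$, and the two power series $\det (I_n - AX)^{-1}$ and $\det (I_n - TA)^{-1}$ have identical coefficient arrays. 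Consequently $[\boldsymbol{t}^{\boldsymbol{m}}] \det (I_n - TA)^{-1} = [\boldsymbol{x}^{\boldsymbol{m}}] \det (I_n - AX)^{-1}$, so the right-hand side of the theorem is, for every $\boldsymbol{m}$, the coefficient of $\boldsymbol{t}^{\boldsymbol{m}}$ in $\det (I_n - TA)^{-1}$. It therefore suffices to prove that the generating function $\Phi (\boldsymbol{t}) = \sum_{\boldsymbol{m} \geq 0} L (\boldsymbol{m}) \boldsymbol{t}^{\boldsymbol{m}}$ of the left-hand values $L (\boldsymbol{m}) = [\boldsymbol{x}^{\boldsymbol{m}}] \prod_i (\sum_j A_{ij} x_j)^{m_i}$ equals $\det (I_n - TA)^{-1}$.

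First I would rewrite each coefficient extraction as a constant term. Writing $(A\boldsymbol{x})_i = \sum_j A_{ij} x_j$ and letting $\operatorname{CT}_{\boldsymbol{x}}$ denote the operator extracting the coefficient of $x_1^0 \cdots x_n^0$, we have $L (\boldsymbol{m}) = \operatorname{CT}_{\boldsymbol{x}} \prod_i ( (A\boldsymbol{x})_i / x_i )^{m_i}$. Summing against $\boldsymbol{t}^{\boldsymbol{m}}$ and evaluating the resulting geometric series in each $t_i$ gives
\[
  \Phi (\boldsymbol{t}) = \operatorname{CT}_{\boldsymbol{x}} \prod_{i = 1}^n \frac{1}{1 - t_i (A\boldsymbol{x})_i / x_i} = \operatorname{CT}_{\boldsymbol{x}} \frac{x_1 \cdots x_n}{\prod_{i = 1}^n (M \boldsymbol{x})_i},
\]
where $M = I_n - TA$, since $x_i - t_i (A\boldsymbol{x})_i = \sum_j (\delta_{ij} - t_i A_{ij}) x_j = (M\boldsymbol{x})_i$. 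The interchange of $\operatorname{CT}_{\boldsymbol{x}}$ with the sum over $\boldsymbol{m}$ is legitimate at the level of formal power series in $\boldsymbol{t}$, as each coefficient of $\boldsymbol{t}^{\boldsymbol{m}}$ receives contributions from only finitely many terms.

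The decisive step is then the evaluation
\[
  \operatorname{CT}_{\boldsymbol{x}} \frac{x_1 \cdots x_n}{\prod_{i = 1}^n (M\boldsymbol{x})_i} = \frac{1}{\det M},
\]
which I would establish by the linear change of variables $\boldsymbol{y} = M \boldsymbol{x}$. Interpreting the constant term as the iterated torus integral $(2 \pi i)^{-n} \oint \cdots \oint (\prod_i (M\boldsymbol{x})_i)^{-1} \, dx_1 \cdots dx_n$, the substitution $\boldsymbol{y} = M\boldsymbol{x}$ has constant Jacobian $\det M$ and turns the integrand into $(y_1 \cdots y_n)^{-1}$, whose torus integral is $1$; this produces exactly the factor $1/\det M$. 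Matching the coefficient of $\boldsymbol{t}^{\boldsymbol{m}}$ on both sides of the resulting identity $\Phi (\boldsymbol{t}) = \det (I_n - TA)^{-1}$ then yields the theorem.

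I expect the main obstacle to be the rigorous justification of this change of variables, since the contour deformation under the linear map $\boldsymbol{x} \mapsto M\boldsymbol{x}$ is not entirely automatic. The clean way around this is to treat everything as an identity of formal power series in $\boldsymbol{t}$: because $M = I_n - TA$ is a formal perturbation of the identity, the substitution $\boldsymbol{y} = M\boldsymbol{x}$ is invertible over the ring of formal power series, and the change-of-variables formula for the constant-term functional can be verified degree by degree in $\boldsymbol{t}$. Alternatively, one can sidestep the analytic subtleties by giving the classical combinatorial argument, expanding $\det (I_n - AX)^{-1}$ through $- \log \det (I_n - AX) = \sum_{k \geq 1} k^{-1} \operatorname{tr} ( (AX)^k )$ and matching weighted cycle covers on both sides (the Cartier--Foata approach); I would fall back on this if the formal change-of-variables bookkeeping proved awkward.
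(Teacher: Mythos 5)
The paper does not actually prove this statement: Theorem~\ref{thm:macmahon} is quoted as a classical result from MacMahon's \emph{Combinatory Analysis} and used as a black box, so there is no internal proof to compare yours against. Judged on its own, your argument is sound and is essentially I.~J.~Good's classical proof of the Master Theorem via the multivariate Cauchy integral representation. The preliminary reduction using $\det(I_n-AX)=\det(I_n-XA)$, the rewriting of $L(\boldsymbol{m})$ as a constant term, and the formal interchange of $\operatorname{CT}_{\boldsymbol{x}}$ with the sum over $\boldsymbol{m}$ are all correct, and they correctly reduce the theorem to the single identity $\operatorname{CT}_{\boldsymbol{x}}\, x_1\cdots x_n/\prod_i(M\boldsymbol{x})_i = 1/\det M$ with $M=I_n-TA$. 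The one step you rightly flag --- that the cycle $M(T)$, the image of the torus $T=\{|x_i|=r\}$ under $\boldsymbol{x}\mapsto M\boldsymbol{x}$, still gives $\oint dy_1\cdots dy_n/(y_1\cdots y_n)=(2\pi i)^n$ --- is a genuine gap in the sketch but is easily discharged: for $\boldsymbol{t}$ in a small polydisc the linear homotopy $M_s=I_n-sTA$, $s\in[0,1]$, keeps the image of $T$ inside $(\mathbb{C}^*)^n$, and since the form $dy_1\cdots dy_n/(y_1\cdots y_n)$ is closed, the integral is homotopy-invariant and hence equals its value at $s=0$. The resulting identity of analytic functions of $\boldsymbol{t}$ near the origin then gives the formal power series identity by comparing Taylor coefficients, and matching the coefficient of $\boldsymbol{t}^{\boldsymbol{m}}$ completes the proof. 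Either this completion or your proposed fallback through $-\log\det(I_n-AX)=\sum_{k\geq 1}\operatorname{tr}((AX)^k)/k$ and weighted cycle covers would make the argument fully rigorous; as written, the proposal is a correct and standard proof in outline, with the contour-deformation step honestly identified as the point needing care.
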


\begin{proof}[Proof of formula \eqref{eq:apery4x}]We note that
  \begin{equation*}
    \frac{1}{(1 - x_1 - x_2) (1 - x_3 - x_4) - x_1 x_2 x_3 x_4} =
     \frac{1}{\det ( I_4 - M X)},
  \end{equation*}
  where $M$ and $X$ are the matrices
  \begin{equation*}
    M = \left(\begin{array}{cccc}
       1 & 1 & 1 & 0\\
       1 & 1 & 0 & 0\\
       0 & 0 & 1 & 1\\
       0 & 1 & 1 & 1
     \end{array}\right), \hspace{1em} X = \left(\begin{array}{cccc}
       x_1 &  &  & \\
       & x_2 &  & \\
       &  & x_3 & \\
       &  &  & x_4
     \end{array}\right) .
  \end{equation*}
  An application of MacMahon's Master Theorem~\ref{thm:macmahon} therefore
  shows that the coefficients $A (\boldsymbol{n})$, with $\boldsymbol{n}= (n_1,
  n_2, n_3, n_4)$, are given by
  \begin{equation*}
    A (\boldsymbol{n}) = [\boldsymbol{x}^{\boldsymbol{n}}] (x_1 + x_2 + x_3)^{n_1}
     (x_1 + x_2)^{n_2} (x_3 + x_4)^{n_3} (x_2 + x_3 + x_4)^{n_4} .
  \end{equation*}
  In order to extract the requisite coefficient, we expand the right-hand side
  as
  \begin{eqnarray*}
    &  & (x_1 + x_2 + x_3)^{n_1} (x_1 + x_2)^{n_2} (x_3 + x_4)^{n_3} (x_2 +
    x_3 + x_4)^{n_4}\\
    & = & \sum_{k_1, k_4} \binom{n_1}{k_1} \binom{n_4}{k_4} x_2^{n_4 - k_4}
    x_3^{n_1 - k_1} (x_1 + x_2)^{k_1 + n_2} (x_3 + x_4)^{n_3 + k_4}\\
    & = & \sum_{k_1, k_2, k_3, k_4} \binom{n_1}{k_1} \binom{n_4}{k_4}
    \binom{k_1 + n_2}{k_2} \binom{n_3 + k_4}{k_3} x_1^{k_1 + n_2 - k_2}
    x_2^{n_4 - k_4 + k_2} x_3^{n_1 - k_1 + k_3} x_4^{n_3 + k_4 - k_3} .
  \end{eqnarray*}
  The summand contributes to $x_1^{n_1} x_2^{n_2} x_3^{n_3} x_4^{n_4}$ if and
  only if $n_i - k_i = n_j - k_j$ for all $i, j = 1, \ldots, 4$. Writing $k =
  n_i - k_i$ for the common value, we obtain
  \begin{equation*}
    A (n_1, n_2, n_3, n_4) = \sum_{k \in \mathbb{Z}} \binom{n_1}{k}
     \binom{n_4}{k} \binom{n_1 - k + n_2}{n_2 - k} \binom{n_3 + n_4 - k}{n_3 -
     k},
  \end{equation*}
  which is equivalent to the claimed \eqref{eq:apery4x}.
\end{proof}

\begin{proof}[Proof of Theorem \ref{thm:aperyx-rf}]
  Recall the elementary formula
  \begin{equation*}
    \frac{1}{(1 - x)^{k + 1}} = \sum_{n \geq 0} \binom{n + k}{k} x^n,
  \end{equation*}
  for integers $k \geq 0$. Combined with an application of the
  multinomial theorem, it implies that
  \begin{equation*}
    \frac{1}{(1 - x_1 - \ldots - x_{\rho})^{k + 1}} = \sum_{n_1 \geq 0}
     \cdots \sum_{n_{\rho} \geq 0} \binom{n_1 + \ldots + n_{\rho} +
     k}{n_1, \ldots, n_{\rho}, k} x_1^{n_1} \cdots x_{\rho}^{n_{\rho}},
  \end{equation*}
  and hence
  \begin{equation}
    \frac{(x_1 \cdots x_{\rho})^k}{(1 - x_1 - \ldots - x_{\rho})^{k + 1}} =
    \sum_{n_1 \geq 0} \cdots \sum_{n_{\rho} \geq 0} \binom{n_1 +
    \ldots + n_{\rho} - (\rho - 1) k}{n_1 - k, \ldots, n_{\rho} - k, k}
    x_1^{n_1} \cdots x_{\rho}^{n_{\rho}} . \label{eq:binomexp}
  \end{equation}
  Here, we used that the multinomial coefficient vanishes if $k > \min (n_1,
  \ldots, n_{\rho})$. Geometrically expanding the left-hand side of
  \eqref{eq:aperyx-rf}, we find that
  \begin{equation*}
    \left( \prod_{j = 1}^{\ell} \left[ 1 - \sum_{r = 1}^{\lambda_j} x_{s (j)
     + r} \right] - \alpha x_1 x_2 \cdots x_d \right)^{- 1} = \sum_{k
     \geq 0} \alpha^k \prod_{j = 1}^{\ell} \frac{(x_{s (j) + 1} \cdots
     x_{s (j) + \lambda_j})^k}{\left[ 1 - \sum_{r = 1}^{\lambda_j} x_{s (j) +
     r} \right]^{k + 1}},
  \end{equation*}
  which we further expand using \eqref{eq:binomexp} to get
  \begin{equation*}
    \sum_{k \geq 0} \alpha^k \sum_{\boldsymbol{n} \in
     \mathbb{Z}_{\geq 0}^d} \boldsymbol{x}^{\boldsymbol{n}} \prod_{j =
     1}^{\ell} \binom{n_{s (j) + 1} + \ldots + n_{s (j) + \lambda_j} -
     (\lambda_j - 1) k}{n_{s (j) + 1} - k, \ldots, n_{s (j) + \lambda_j} - k,
     k} = \sum_{\boldsymbol{n} \in \mathbb{Z}_{\geq 0}^d} A_{\lambda,
     \alpha} (\boldsymbol{n}) \boldsymbol{x}^{\boldsymbol{n}},
  \end{equation*}
  with $A_{\lambda, \alpha} (\boldsymbol{n})$ as in \eqref{eq:Alambda}.
\end{proof}

\section{The supercongruences}\label{sec:sc}

Our proof of Theorem~\ref{thm:aperyx-sc}, which generalizes the
supercongruence in Theorem~\ref{thm:apery4-sc}, builds upon the respective
proofs in \cite{gessel-super} and \cite{beukers-apery85}.

We need a number of lemmas in preparation. To begin with, we prove the
following extension of Jacobsthal's binomial congruence \cite{gessel-euler},
\cite{granville-bin97} to binomial coefficients which are allowed to have
negative entries (see Remark~\ref{rk:apery-shifted}).

\begin{lemma}
  \label{lem:jacobsthal}For all primes $p$ and all integers $a, b$,
  \begin{equation}
    \binom{a p}{b p} / \binom{a}{b} \equiv \varepsilon \pmod{p^q}, \label{eq:jacobsthal}
  \end{equation}
  where $q$ is the power of $p$ dividing $p^3 a b (a - b) / 12$ and where
  $\varepsilon = 1$, unless $p = 2$ and $(a, b) \equiv (0, 1)$ modulo $2$ in
  which case $\varepsilon = - 1$.
\end{lemma}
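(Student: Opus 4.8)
The plan is to reduce the statement, for arbitrary integers $a,b$, to the \emph{classical} Jacobsthal congruence in the range $0 \leq b \leq a$, which is precisely \eqref{eq:jacobsthal} restricted to non-negative entries and which I take as known from \cite{gessel-euler}, \cite{granville-bin97} (the factor $1/12$ and the sign $\varepsilon$ already appear there, accounting for the small primes $p = 2,3$). The new content is the passage to binomial coefficients with negative entries, which I would accomplish using the negation rule \eqref{eq:binom-neg}, the symmetry $\binom{n}{k} = \binom{n}{n-k}$ recorded in Remark~\ref{rk:apery-shifted}, and the elementary identity $\binom{x-1}{y-1} = \frac{y}{x}\binom{x}{y}$.

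First I would dispose of the degenerate cases: if $b = 0$ or $a = b$, then $ab(a-b) = 0$, the modulus is $p^{\infty}$, and both sides equal $1 = \varepsilon$ by inspection; and if $\binom{a}{b} = 0$, then $(a,b)$ lies in none of the admissible sign patterns and $\binom{ap}{bp} = 0$ as well, so I assume $\binom{a}{b} \neq 0$. As recorded in Remark~\ref{rk:apery-shifted}, the nonzero values occur in three cases: (I) $0 \leq b \leq a$; (II) $a < 0 \leq b$; and (III) $b \leq a < 0$. Case (I) is the classical congruence. Applying the symmetry $\binom{a}{b} = \binom{a}{a-b}$ simultaneously to the numerator and denominator of $\binom{ap}{bp}/\binom{a}{b}$ turns Case (III) into Case (II) with $b$ replaced by $a-b$; since $ab(a-b)$ is invariant under $b \mapsto a-b$, this preserves the modulus, so it remains to treat Case (II).

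For Case (II), set $N = b - a - 1 \geq 0$ and $M = -a - 1 \geq 0$, so that $N - M = b \geq 0$. Using \eqref{eq:binom-neg} followed by the symmetry, one computes $\binom{a}{b} = (-1)^{b}\binom{N}{M}$ and $\binom{ap}{bp} = (-1)^{bp}\binom{(N+1)p-1}{(M+1)p-1}$. The crucial step is the identity
\[
\frac{\binom{(N+1)p-1}{(M+1)p-1}}{\binom{N}{M}} = \frac{\binom{(N+1)p}{(M+1)p}}{\binom{N+1}{M+1}},
\]
which follows by applying $\binom{x-1}{y-1} = \frac{y}{x}\binom{x}{y}$ with $x = (N+1)p$, $y = (M+1)p$ to the numerator and $\binom{N}{M} = \frac{M+1}{N+1}\binom{N+1}{M+1}$ to the denominator; the common factor $\frac{M+1}{N+1}$ cancels exactly. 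The right-hand side is now a ratio of ordinary Jacobsthal binomials, so the classical congruence applies with modulus $p^{q'}$, $q' = v_p\bigl(p^3 (N+1)(M+1)(N-M)/12\bigr)$. Since $(N+1)(M+1)(N-M) = (b-a)(-a)(b) = ab(a-b)$, we get $q' = q$, matching the claimed modulus.

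It remains to reconcile the sign. Collecting the factors gives $\binom{ap}{bp}/\binom{a}{b} = (-1)^{b(p-1)}\,\binom{(N+1)p}{(M+1)p}/\binom{N+1}{M+1}$. For odd $p$ the prefactor $(-1)^{b(p-1)}$ is trivial and the classical $\varepsilon$ is $1$, so \eqref{eq:jacobsthal} holds with $\varepsilon = 1$. The only delicate point, which I expect to be the main obstacle, is $p = 2$: there the prefactor becomes $(-1)^{b}$, while the classical sign attached to $\binom{(N+1)\cdot 2}{(M+1)\cdot 2}$ is $-1$ exactly when $(N+1,M+1) \equiv (0,1) \pmod 2$, that is, when $a$ and $b$ are both odd. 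Running through the four residue classes of $(a,b)$ modulo $2$, the product $(-1)^{b}\cdot\varepsilon_{\mathrm{classical}}$ equals $-1$ precisely when $a$ is even and $b$ is odd, which is exactly the condition $(a,b) \equiv (0,1)\pmod 2$ of the lemma; Case (III) is settled by the same table after $b \mapsto a-b$. Beyond this parity bookkeeping, the remaining care is only to verify that every binomial coefficient occurring in the manipulations is nonzero, so that the gamma-function ratios and the identity $\binom{x-1}{y-1} = \frac{y}{x}\binom{x}{y}$ are legitimate; this is guaranteed by the sign patterns defining Cases (I)--(III).
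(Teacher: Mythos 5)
Your proposal is correct and takes essentially the same route as the paper's proof: cite the classical Jacobsthal congruence for nonnegative entries, pass to negative entries via Sprugnoli's negation rule together with the shift identity $\binom{a}{b}=\frac{a}{b}\binom{a-1}{b-1}$, verify that $ab(a-b)$ (hence the modulus) is preserved, and settle the sign by the same parity check at $p=2$. The only cosmetic difference is that you reduce the case $b\leq a<0$ to $a<0\leq b$ by symmetry, whereas the paper handles it directly with a second negation identity.
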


\begin{proof}
  Congruence \eqref{eq:jacobsthal}, for nonnegative $a, b$, is proved in
  \cite{gessel-euler} (alternatively, a proof for $p \geq 5$ is given
  in \cite{granville-bin97}). We therefore only indicate how to extend
  \eqref{eq:jacobsthal} to negative values of $a$ or $b$. Note that, for all
  $a, b \in \mathbb{Z}$ with $b \neq 0$,
  \begin{equation*}
    \binom{a}{b} = \frac{a}{b} \binom{a - 1}{b - 1},
  \end{equation*}
  and hence
  \begin{equation*}
    \binom{a p}{b p} / \binom{a}{b} = \binom{a p - 1}{b p - 1} / \binom{a -
     1}{b - 1} .
  \end{equation*}
  We claim that the extension of \eqref{eq:jacobsthal} to the case $a < 0$ and
  $b < 0$ therefore follows from
  \begin{equation}
    \binom{a}{b} = \binom{- b - 1}{- a - 1} (- 1)^{a - b} \operatorname{sgn} (a - b),
    \label{eq:binom-neg2}
  \end{equation}
  where $\operatorname{sgn}$ is defined as in Remark~\ref{rk:apery-shifted}. This is
  clear for $p \geq 3$. Write $\varepsilon (a, b) = - 1$ if $(a, b)
  \equiv (0, 1)$ modulo $2$ and $\varepsilon (a, b) = 1$ otherwise. It is
  straightforward to check that
  \begin{equation*}
    (- 1)^{a - b} \varepsilon (- b, - a) = \varepsilon (a, b),
  \end{equation*}
  which shows the case $p = 2$.
  
  Similarly, if $a < 0$ and $b > 0$, then we may apply
  \begin{equation*}
    \binom{a}{b} = \binom{b - a - 1}{- a - 1} (- 1)^{b + 1} \operatorname{sgn} (a -
     b) \operatorname{sgn} (- a - 1)
  \end{equation*}
  as well as
  \begin{equation*}
    (- 1)^b \varepsilon (b - a, - a) = \varepsilon (a, b) .
  \end{equation*}
  A derivation of the above binomial identities, which are valid for all $a, b
  \in \mathbb{Z}$, may be found in \cite{sprugnoli-binom}.
\end{proof}

Much simpler and well-known is the following congruence.

\begin{lemma}
  \label{lem:powersummod}Let $p \geq 5$ be a prime, and $\varepsilon \in
  \{ - 1, 1 \}$. Then, for all integers $r \geq 0$,
  \begin{equation}
    \sum_{k = 1, p \nmid k}^{p^r - 1} \frac{\varepsilon^k}{k^2} \equiv 0
    \pmod{p^r} . \label{eq:powersummod}
  \end{equation}
\end{lemma}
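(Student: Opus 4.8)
The plan is to treat the two cases $\varepsilon=-1$ and $\varepsilon=1$ separately, after disposing of the trivial case $r=0$ (where the sum is empty and the modulus is $1$). Throughout, the hypothesis $p\ge5$, and in particular that $p$ is odd, ensures that $p^r$ is odd and that $k$ is invertible modulo $p^r$ for every index $k$ with $p\nmid k$, so that each summand $\varepsilon^k/k^2$ is a well-defined residue modulo $p^r$. Note, however, that the sign $\varepsilon^k$ genuinely depends on the integer $k$ and not merely on $k\bmod p^r$, which is why the two cases require different ideas.

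For $\varepsilon=-1$, I would exploit the involution $k\mapsto p^r-k$ on the index set $S=\{k:1\le k\le p^r-1,\ p\nmid k\}$. This map sends $S$ to itself, and since $p^r$ is odd it has no fixed point (the two paired indices have opposite parity). For a pair $\{k,\,p^r-k\}$ one has $(p^r-k)^2\equiv k^2\pmod{p^r}$, so that $1/(p^r-k)^2\equiv 1/k^2$, while $(-1)^{p^r-k}=-(-1)^k$ because $p^r$ is odd. Hence the two paired terms are negatives of one another modulo $p^r$, and summing over all pairs gives $0$.

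For $\varepsilon=1$ the pairing above only reproduces $2\sum 1/k^2$ and is useless, so I would instead reindex by the multiplicative inverse. Since $k\mapsto k^{-1}\bmod p^r$ permutes the units modulo $p^r$ and $1/k^2$ depends only on $k\bmod p^r$, one obtains $\sum_{k\in S}k^{-2}\equiv\sum_{k\in S}k^2\pmod{p^r}$. The right-hand sum I would then evaluate exactly: writing $\sum_{k\in S}k^2=\sum_{k=1}^{p^r-1}k^2-p^2\sum_{j=1}^{p^{r-1}-1}j^2$ and using the closed form $\sum_{k=1}^{n-1}k^2=(n-1)n(2n-1)/6$, the first piece equals $(p^r-1)p^r(2p^r-1)/6$ and the second piece carries an explicit factor $p^{r+1}$. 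Both are divisible by $p^r$, which is exactly where the hypothesis $p\ge5$ enters: it guarantees $p\nmid 6$, so dividing by $6$ does not disturb the $p$-adic valuation (the argument fails for $p=3$).

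The computations here are routine; the only points requiring care are the bookkeeping of $p$-adic valuations in the $\varepsilon=1$ case and the verification that the inverse map is a genuine reindexing of the residues modulo $p^r$ rather than of the chosen representatives in $[1,p^r-1]$. Neither of these presents a real obstacle, and the two cases together establish the congruence for all $r\ge1$.
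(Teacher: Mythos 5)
Your proof is correct, but it follows a genuinely different route from the one in the paper. The paper handles both signs $\varepsilon$ at once with a single scaling argument: choosing an odd integer $\alpha$ with $p \nmid \alpha$ and $\alpha^2 \nequiv 1 \pmod{p}$ (for instance $\alpha = 3$), it reindexes the sum by $k \mapsto \alpha k$ to obtain $\alpha^{-2} S \equiv S \pmod{p^r}$ for the sum $S$ in question, and then divides by $\alpha^{-2} - 1$; the hypothesis $p \geq 5$ enters only through the existence of such an $\alpha$. You instead treat the two signs separately: for $\varepsilon = -1$ the antipodal pairing $k \mapsto p^r - k$ cancels the sum term by term (and in fact requires only that $p$ be odd), while for $\varepsilon = 1$ you reindex by $k \mapsto k^{-1} \bmod p^r$ to reduce the claim to $\sum k^2 \equiv 0 \pmod{p^r}$, which you verify from the closed form for the power sum, with $p \geq 5$ entering as $p \nmid 6$. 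Your argument is longer and loses the uniformity in $\varepsilon$, but every step is completely explicit, and you are more careful than the paper on one delicate point: as you observe, $\varepsilon^k$ depends on the parity of the integer $k$ and not merely on $k \bmod p^r$, so a reindexing of residues modulo $p^r$ does not automatically carry the sign along. Your pairing for $\varepsilon = -1$ is built precisely so that this is not an issue, whereas the paper's one-line substitution $k \mapsto \alpha k$ leaves this point implicit (reduction of $\alpha k$ into the range $[1, p^r - 1]$ can change its parity), so your treatment is arguably the more watertight of the two at exactly this spot.
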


\begin{proof}
  Let $\alpha$ be an odd integer, not divisible by $p$, such that $\alpha^2
  \nequiv 1$ modulo $p$ (take, for instance, $\alpha = 3$). Then,
  \begin{equation*}
    \frac{1}{\alpha^2} \sum_{k = 1, p \nmid k}^{p^r - 1}
     \frac{\varepsilon^k}{k^2} = \sum_{k = 1, p \nmid k}^{p^r - 1}
     \frac{\varepsilon^k}{( \alpha k)^2} \equiv \sum_{k = 1, p \nmid k}^{p^r -
     1} \frac{\varepsilon^k}{k^2} \pmod{p^r},
  \end{equation*}
  since the second and third sum run over the same residues modulo $p^r$ (note
  that $\varepsilon^{\alpha k} = \varepsilon^k$ since $\alpha$ is odd). As
  $\alpha^2$ is not divisible by $p$, the congruence \eqref{eq:powersummod}
  follows.
\end{proof}

The next lemmas establish properties of the summands of the numbers
$A_{\lambda, \varepsilon} (\boldsymbol{n})$ as introduced in \eqref{eq:Alambda},
which will be needed in our proof of Theorem~\ref{thm:aperyx-sc}. Throughout
this section, we fix the notation of Theorem~\ref{thm:aperyx-sc}, letting
$\lambda = (\lambda_1, \ldots, \lambda_{\ell}) \in \mathbb{Z}_{> 0}^{\ell}$
with $d = \lambda_1 + \ldots + \lambda_{\ell}$ and setting $s (j) = \lambda_1
+ \ldots + \lambda_{j - 1}$.

\begin{lemma}
  \label{lem:Alambdak}Let $\boldsymbol{n} \in \mathbb{Z}^d$, $k \in
  \mathbb{Z}$, and define
  \begin{equation}
    A_{\lambda} (\boldsymbol{n}; k) = \prod_{j = 1}^{\ell} \binom{n_{s (j) + 1}
    + \ldots +_{} n_{s (j) + \lambda_j} - (\lambda_j - 1) k}{n_{s (j) + 1} -
    k, \ldots, n_{s (j) + \lambda_j} - k, k} . \label{eq:Alambdak}
  \end{equation}
  \begin{enumeratealpha}
    \item \label{n:Ac2}If $\ell \geq 2$, then, for all primes $p$ and
    integers $r \geq 1$,
    \begin{equation}
      A_{\lambda} (p^r \boldsymbol{n}; p k) \equiv A_{\lambda} (p^{r - 1}
      \boldsymbol{n}; k) \pmod{p^{2 r}} . \label{eq:Akind2}
    \end{equation}
    \item \label{n:Ac3}If $\ell \geq 2$ and $\max (\lambda_1, \ldots,
    \lambda_{\ell}) \leq 2$, then, for primes $p \geq 5$ and
    integers $r \geq 1$,
    \begin{equation}
      A_{\lambda} (p^r \boldsymbol{n}; p k) \equiv A_{\lambda} (p^{r - 1}
      \boldsymbol{n}; k) \pmod{p^{3 r}} . \label{eq:Akind}
    \end{equation}
  \end{enumeratealpha}
\end{lemma}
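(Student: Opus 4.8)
The plan is to reduce \eqref{eq:Akind2} and \eqref{eq:Akind} to Jacobsthal's congruence (Lemma~\ref{lem:jacobsthal}), applied to each binomial factor of a single summand, and then to recover the full power of $p$ by inducting on $r$. First I would expand each multinomial in \eqref{eq:Alambdak} as a telescoping product of ordinary binomial coefficients,
\begin{equation*}
  \binom{m_1 + \cdots + m_\rho}{m_1, \ldots, m_\rho} = \prod_{t = 1}^{\rho - 1} \binom{m_t + m_{t + 1} + \cdots + m_\rho}{m_t},
\end{equation*}
ordering the entries so that the entry $k$ comes last. This writes $A_\lambda (\boldsymbol{n}; k)$ as a product of binomial coefficients $\binom{b_i}{c_i}$ whose arguments $b_i, c_i$ are integer linear combinations of the $n_j$ and $k$. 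The point of this normal form is that passing from $(p^{r - 1} \boldsymbol{n}, k)$ to $(p^r \boldsymbol{n}, p k)$ multiplies every entry, hence every $b_i$ and $c_i$, by $p$; thus
\begin{equation*}
  \frac{A_\lambda (p^r \boldsymbol{n}; p k)}{A_\lambda (p^{r - 1} \boldsymbol{n}; k)} = \prod_i \frac{\binom{p b_i}{p c_i}}{\binom{b_i}{c_i}},
\end{equation*}
with $b_i, c_i$ evaluated at $p^{r - 1} \boldsymbol{n}$ and $k$.

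For $r = 1$ I would apply Lemma~\ref{lem:jacobsthal} to each factor: since the power of $p$ dividing $p^3 b_i c_i (b_i - c_i) / 12$ is at least $3$ when $p \geq 5$ and at least $2$ when $p \geq 3$, every ratio is $\equiv \varepsilon_i$ modulo $p^3$ (resp. $p^2$), and a check of the sign rule shows $\prod_i \varepsilon_i = 1$. This already gives \eqref{eq:Akind} for $p \geq 5$ and \eqref{eq:Akind2} for $p \geq 3$ in the base case, the loss of one or two powers of $p$ through the $1 / 12$ in Lemma~\ref{lem:jacobsthal} accounting precisely for the restrictions $p \geq 3$ in \ref{n:Ac2} and $p \geq 5$ in \ref{n:Ac3}. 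For the inductive step I would feed in the case $r - 1$ applied to $p \boldsymbol{n}$, which is exactly the asserted congruence but with the weaker modulus $p^{2 (r - 1)}$ (resp. $p^{3 (r - 1)}$); the task is then to gain the remaining two (resp. three) powers of $p$. These I would extract from a sharpening of Jacobsthal's congruence recording the leading error term, of Kazandzidis type, which gives $\binom{p b_i}{p c_i} / \binom{b_i}{c_i} = \varepsilon_i (1 + p^3 b_i c_i (b_i - c_i) w_i)$ with $w_i \in \mathbb{Z}_{(p)}$. Because each entry carries a factor $p^{r - 1}$ from $p^{r - 1} \boldsymbol{n}$, and because within a block with $\lambda_j \leq 2$ the two error terms combine through an algebraic identity such as
\begin{equation*}
  (N_1 + N_2 - k)(N_1 - k) N_2 + N_2 k (N_2 - k) = N_1 N_2 (N_1 + N_2 - 2 k), \qquad N_i = p^{r - 1} n_{s (j) + i},
\end{equation*}
the combined correction acquires additional factors of $p^{r - 1}$. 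The hypothesis $\max (\lambda_1, \ldots, \lambda_\ell) \leq 2$ is what keeps each block a product of only two binomials, making this cancellation strong enough for the modulus $p^{3 r}$; for blocks of larger size only the weaker modulus $p^{2 r}$ survives, which is part \ref{n:Ac2}. The hypothesis $\ell \geq 2$ guarantees at least two blocks over which to assemble these estimates.

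The step I expect to be the main obstacle is precisely this modulus boosting in the inductive step: a single factor-by-factor use of Lemma~\ref{lem:jacobsthal} yields only the modulus $p^{r + 2}$, which falls short of both $p^{2 r}$ and $p^{3 r}$ once $r \geq 3$, so the whole strength of the result rests on controlling the error terms $w_i$ modulo $p$ and on the cancellation identities above. Subsidiary points that require care are the case $p = 2$ of \ref{n:Ac2}, where the refined analysis is needed already in the base case and the sign bookkeeping $\prod_i \varepsilon_i = 1$ must be verified by pairing up the offending factors, and the case of $\boldsymbol{n}$ with negative entries, where $k$ and some $b_i$ may be negative; here one invokes the extension of Jacobsthal's congruence to negative arguments provided by Lemma~\ref{lem:jacobsthal} together with the negation rule \eqref{eq:binom-neg}.
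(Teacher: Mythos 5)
Your setup --- unravelling each multinomial into binomial coefficients and applying Jacobsthal's congruence factor by factor to the ratio $A_{\lambda} (p^r \boldsymbol{n}; p k) / A_{\lambda} (p^{r - 1} \boldsymbol{n}; k)$ --- is indeed the paper's framework, and you correctly locate the difficulty: a naive factor-by-factor application controls the multiplicative error only modulo $p^{r + 2}$ when $p \nmid k$. But the mechanism you propose to bridge the gap does not work, and it is not the paper's. First, the induction on $r$ buys nothing: the inductive hypothesis bounds the $p$-adic valuation of the \emph{difference} by $3 (r - 1)$, while Jacobsthal controls the \emph{ratio}; these are statements about different quantities and cannot be combined to reach $p^{3 r}$. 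Second, the premise that ``each entry carries a factor $p^{r - 1}$'' is false: the entries $k$, $N_1 - k$, $N_1 + N_2 - k$, $N_2 - k$ carry no power of $p$ when $p \nmid k$, so each error term $p^3 b_i c_i (b_i - c_i) w_i$ is divisible only by $p^{r + 2}$; moreover your cancellation identity adds error terms with \emph{different} units $w_i$, so it yields extra powers of $p$ only if one knows $w_i \equiv w_j$ to high $p$-adic precision --- an input far beyond Lemma~\ref{lem:jacobsthal} and still insufficient for large $r$. (Your sign claim $\prod_i \varepsilon_i = 1$ for $p = 2$ is also false in general; in one subcase the paper finds $c \equiv (- 1)^{\ell}$ modulo $2^r$ and compensates by divisibility rather than by cancellation of signs.)

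The missing idea is much simpler. Write $k = p^{s - 1} k'$ with $p \nmid k'$ and prove the congruence relating the parameters $(p^r \boldsymbol{n}, p^s k')$ and $(p^{r - 1} \boldsymbol{n}, p^{s - 1} k')$. Applying Lemma~\ref{lem:jacobsthal} with entries $a = p^{r - 1} m$, $b = p^{s - 1} k'$ that are themselves divisible by high powers of $p$, the modulus $q = v_p (p^3 a b (a - b) / 12)$ grows automatically: one gets $A_{\lambda} (p^r \boldsymbol{n}; p^s k') = c \, A_{\lambda} (p^{r - 1} \boldsymbol{n}; p^{s - 1} k')$ with $c \equiv 1 \pmod{p^{r + 2 \min (r, s)}}$, which already gives $p^{3 r}$ when $s \geq r$, with no induction and no refinement of Jacobsthal. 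When $s < r$, the deficit $2 (r - s)$ is supplied not by sharpening the error term but by the $p$-divisibility of the quantity being multiplied: since $p \nmid k'$, each of the $\ell \geq 2$ blocks contributes a factor $\binom{p^{r - 1} n}{p^{s - 1} k'}$ divisible by $p^{r - s}$, hence $p^{2 (r - s)}$ divides $A_{\lambda} (p^{r - 1} \boldsymbol{n}; p^{s - 1} k')$, and
\begin{equation*}
  v_p \bigl( (c - 1) \, A_{\lambda} (p^{r - 1} \boldsymbol{n}; p^{s - 1} k') \bigr) \geq (r + 2 s) + 2 (r - s) = 3 r .
\end{equation*}
This is where the hypothesis $\ell \geq 2$ actually enters, and it eliminates any need for induction on $r$, for Kazandzidis-type expansions, or for the cancellation identity.
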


\begin{proof}
  We show \eqref{eq:Akind2} and \eqref{eq:Akind} by proving that for integers
  $r, s \geq 1$ and $k$ such that $p \nmid k$,
  \begin{equation}
    A_{\lambda} (p^r \boldsymbol{n}; p^s k) \equiv A_{\lambda} (p^{r - 1}
    \boldsymbol{n}; p^{s - 1} k) \pmod{p^{\alpha r}},
    \label{eq:Akind1}
  \end{equation}
  where $\alpha = 2$ or $\alpha = 3$ depending on whether $\max (\lambda_1,
  \ldots, \lambda_{\ell}) \leq 2$.
  
  Let us first consider the case $\ell \geq 2$ and $\max (\lambda_1,
  \ldots, \lambda_{\ell}) \leq 2$. Then each factor of
  \eqref{eq:Alambdak} is a single binomial, if $\lambda_j = 1$, or of the form
  \begin{equation*}
    \binom{m_1}{k} \binom{m_1 + m_2 - k}{m_1},
  \end{equation*}
  if $\lambda_j = 2$. Let $p$ be a prime such that $p \geq 5$. It follows
  from Jacobsthal's congruence \eqref{eq:jacobsthal} that
  \begin{equation*}
    \binom{p^r m_1}{p^s k} / \binom{p^{r - 1} m_1}{p^{s - 1} k} \equiv 1
     \pmod{p^{r + s + \min (r, s)}}
  \end{equation*}
  as well as
  \begin{equation*}
    \binom{p^r (m_1 + m_2) - p^s k}{p^r m_1} / \binom{p^{r - 1} (m_1 + m_2) -
     p^{s - 1} k}{p^{r - 1} m_1} \equiv 1 \pmod{p^{r + 2
     \min (r, s)}} .
  \end{equation*}
  Consequently,
  \begin{equation}
    A_{\lambda} (p^r \boldsymbol{n}; p^s k) = c A_{\lambda} (p^{r - 1}
    \boldsymbol{n}; p^{s - 1} k) \label{eq:Akind0}
  \end{equation}
  with $c \equiv 1$ modulo $p^{r + 2 \min (r, s)}$. If $s \geq r$, this
  proves congruence \eqref{eq:Akind1} with $\alpha = 3$. On the other hand,
  suppose $s \leq r$. Since $p \nmid k$, we have
  \begin{equation*}
    \binom{p^r n}{p^s k} = p^{r - s}  \frac{n}{k} \binom{p^r n - 1}{p^s k -
     1} \equiv 0 \pmod{p^{r - s}} .
  \end{equation*}
  Since $\ell \geq 2$, it follows that $p^{2 (r - s)}$ divides
  $A_{\lambda} (p^r \boldsymbol{n}; p^s k)$. Since $(r + 2 s) + 2 (r - s) = 3
  r$, the congruence \eqref{eq:Akind1}, with $\alpha = 3$, now follows from
  \eqref{eq:Akind0}. This shows \ref{n:Ac3}.
  
  Let us now turn to the proof of \ref{n:Ac2}. Assume that $\ell \geq 2$.
  Note that, for any positive integer $\rho$,
  \begin{equation*}
    \binom{m_1 + \ldots + m_{\rho} - (\rho - 1) k}{m_1 - k, \ldots, m_{\rho}
     - k, k} = \binom{m_1}{k} \binom{m_1 + (m_2 - k) + \ldots + (m_{\rho} -
     k)}{m_1, m_2 - k, \ldots, m_{\rho} - k},
  \end{equation*}
  so that, as in the previous case, $p^{\ell (r - s)}$ divides $A_{\lambda}
  (p^r \boldsymbol{n}; p^s k)$ if $r \geq s$.
  
  Initially, assume that $p \geq 3$. By further unravelling the
  multinomial coefficient as a product of binomial coefficients and applying
  Jacobsthal's congruence \eqref{eq:jacobsthal} as above, we find that
  \begin{equation*}
    A_{\lambda} (p^r \boldsymbol{n}; p^s k) = c A_{\lambda} (p^{r - 1}
     \boldsymbol{n}; p^{s - 1} k)
  \end{equation*}
  with $c \equiv 1$ modulo $p^{3 \min (r, s) - \delta}$ and $\delta = 0$, if
  $p \geq 5$, and $\delta = 1$, if $p = 3$. In light of $p^{2 (r - s)}$
  dividing $A_{\lambda} (p^r \boldsymbol{n}; p^s k)$ if $r \geq s$, we
  conclude congruence \eqref{eq:Akind1} with $\alpha = 2$.
  
  Now, consider $p = 2$. If $r \geq 2$ and $s \geq 2$, then the sign
  $\varepsilon$ in Jacobsthal's congruence \eqref{eq:jacobsthal} is always $+
  1$ when applying the above approach, and we again find that
  \eqref{eq:Akind1} holds with $\alpha = 2$. On the other hand, if $r = 1$,
  then it suffices to use the (combinatorial) congruence
  \begin{equation*}
    \binom{p a}{p b} \equiv \binom{a}{b} \pmod{p^2},
  \end{equation*}
  which holds for all primes $p$. It remains to consider the case $r \geq
  2$ and $s = 1$. Applying the approach employed for $p \geq 3$, we find
  that
  \begin{equation}
    A_{\lambda} (p^r \boldsymbol{n}; p^s k) = c A_{\lambda} (p^{r - 1}
    \boldsymbol{n}; p^{s - 1} k), \label{eq:Akind02}
  \end{equation}
  where $c \equiv \pm 1$ modulo $p^{3 \min (r, s) - 2} = 2$. If $\max
  (\lambda_1, \ldots, \lambda_{\ell}) \leq 2$, then we, in fact, have $c
  \equiv (- 1)^{\ell}$ modulo $p^{r + 2 \min (r, s) - 2} = 2^r$. Since
  $A_{\lambda} (p^r \boldsymbol{n}; p^s k)$ is divisible by $p^{\ell (r - 1)}$,
  congruence \eqref{eq:Akind1} trivially holds with $\alpha = 2$ if $\ell
  \geq 3$. Hence, we may assume that $\ell = 2$. If $\max (\lambda_1,
  \lambda_2) \leq 2$, then $c \equiv 1$ modulo $2^r$ in
  \eqref{eq:Akind02} and, since both sides of \eqref{eq:Akind02} are divisible
  by $2^{2 r - 2}$, congruence \eqref{eq:Akind1} with $\alpha = 2$ again
  follows. Finally, suppose that there is $j$ such that $\lambda_j \geq
  3$. Then the factor corresponding to $j$ in \eqref{eq:Alambdak} is of the
  form
  \begin{equation*}
    \binom{m_1}{k} \binom{m_1 + m_2 - k}{m_1} \binom{m_1 + m_2 + m_3 - 2
     k}{m_3 - k} \binom{m_1 + \ldots + m_{\rho} - (\rho - 1) k}{m_1 + m_2 +
     m_3 - 2 k, m_4 - k, \ldots} .
  \end{equation*}
  Note that, for even $m_1, m_2, m_3$ and odd $k$, the third binomial in this
  product is even. Hence, $A_{\lambda} (p^r \boldsymbol{n}; p^s k)$ is divisible
  by $2^{2 (r - 1) + 1} = 2^{2 r - 1}$. In light of \eqref{eq:Akind02}, this
  proves congruence \eqref{eq:Akind1} with $\alpha = 2$.
\end{proof}

The next congruence, with $k \geq 0$, has been used in
\cite{beukers-apery85}. For our present purpose, we extend it to the case of
negative $k$.

\begin{lemma}
  \label{lem:binomind}For primes $p$, integers $m, k$ and integers $r
  \geq 1$,
  \begin{equation}
    \binom{p^r m - 1}{k} (- 1)^k \equiv \binom{p^{r - 1} m - 1}{[k / p]} (-
    1)^{[k / p]} \pmod{p^r} . \label{eq:binomind}
  \end{equation}
\end{lemma}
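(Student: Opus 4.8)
The plan is to prove the (known) case $k \geq 0$ first and then bootstrap the general case from it. For $k \geq 0$ I would start from the elementary product expansion
\[
(-1)^k \binom{p^r m - 1}{k} = \prod_{i = 1}^k \frac{i - p^r m}{i} = \prod_{i = 1}^k \left( 1 - \frac{p^r m}{i} \right),
\]
which follows immediately from $\binom{p^r m - 1}{k} = \prod_{i=1}^k (p^r m - i)/i$, and split the product according to whether $p \mid i$. Each factor with $p \nmid i$ is a $p$-adic unit congruent to $1$ modulo $p^r$, so the product of these factors is $\equiv 1 \pmod{p^r}$. Reindexing the factors with $p \mid i$ by $i = pj$, $1 \leq j \leq [k/p]$, produces $\prod_{j=1}^{[k/p]} (1 - p^{r-1} m / j)$, which by the very same product formula equals $(-1)^{[k/p]} \binom{p^{r-1} m - 1}{[k/p]}$. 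Multiplying the two pieces gives \eqref{eq:binomind} for $k \geq 0$: since the discarded unit differs from $1$ by a multiple of $p^r$ and the remaining binomial is a $p$-adic integer, the difference of the two sides is divisible by $p^r$.

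To pass to $k < 0$ I would use the symmetry $\binom{n}{k} = \binom{n}{n - k}$ from Remark~\ref{rk:apery-shifted}, which is available here because a nonzero $\binom{p^r m - 1}{k}$ with $k < 0$ forces $p^r m - 1 < 0$ and $n - k \geq 0$. Setting $k' = p^r m - 1 - k \geq 0$, the case already proved applies to $\binom{p^r m - 1}{k'} = \binom{p^r m - 1}{k}$. The key bookkeeping step is the floor identity $[k'/p] = p^{r-1} m - 1 - [k/p]$, which I would verify by writing $k = pq + s$ with $0 \leq s < p$. A second application of the symmetry, now on the reduced modulus, gives $\binom{p^{r-1} m - 1}{[k'/p]} = \binom{p^{r-1} m - 1}{[k/p]}$, so that the binomial on the right-hand side of \eqref{eq:binomind} is restored.

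What remains is the sign accounting, which is where I expect the only real friction. From $k' = p^r m - 1 - k$ one obtains $(-1)^{k'} = (-1)^{p^r m} (-1)^{k+1}$, and from the floor identity $(-1)^{[k'/p]} = (-1)^{p^{r-1} m} (-1)^{[k/p]+1}$; substituting these into the proved congruence for $k'$ and cancelling the common factor $-1$ reduces \eqref{eq:binomind} to the requirement that $(-1)^{p^r m} = (-1)^{p^{r-1} m}$. For odd $p$ both sides equal $(-1)^m$, and for $p = 2$ with $r \geq 2$ both equal $1$; the single leftover case $p = 2$, $r = 1$ is harmless, since the congruence is then modulo $2$, where every sign factor is trivially $\equiv 1$. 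Finally, I would dispose of the degenerate range $p^r m - 1 < k < 0$, where $\binom{p^r m - 1}{k}$ vanishes: there $k \geq p^r m$ forces $[k/p] \geq p^{r-1} m$ while $m \leq 0$ forces $p^{r-1} m - 1 < 0$, so $\binom{p^{r-1} m - 1}{[k/p]}$ vanishes as well and both sides of \eqref{eq:binomind} are $0$.
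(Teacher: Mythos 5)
Your proposal is correct and follows essentially the same route as the paper: the product expansion of $\binom{p^r m-1}{k}$ split according to divisibility of the index by $p$ for $k\geq 0$, then the symmetry $\binom{n}{k}=\binom{n}{n-k}$ together with the floor identity $[(p^r m-k-1)/p]=p^{r-1}m-[k/p]-1$ and the parity of $p^{r-1}(p+1)m$ (with $p=2$, $r=1$ absorbed by the modulus) for $k<0$. The only differences are cosmetic sign bookkeeping and your slightly more explicit treatment of the degenerate vanishing cases.
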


\begin{proof}
  First, assume that $k \geq 0$. Following \cite[Lemma
  2]{beukers-apery85}, we split the defining product of the binomial
  coefficient, according to whether the index is divisible by $p$ or not, to
  obtain
  \begin{eqnarray*}
    \binom{p^r m - 1}{k} & = & \prod_{j = 1}^k \frac{p^r m - j}{j}\\
    & = & \prod_{j = 1, p \nmid j}^k \frac{p^r m - j}{j} \prod_{\lambda =
    1}^{[k / p]} \frac{p^{r - 1} m - \lambda}{\lambda}\\
    & = & \binom{p^{r - 1} m - 1}{[k / p]} \prod_{j = 1, p \nmid j}^k
    \frac{p^r m - j}{j} .
  \end{eqnarray*}
  Congruence \eqref{eq:binomind}, with $k \geq 0$, follows upon reducing
  modulo $p^r$.
  
  On the other hand, assume $k < 0$. Since \eqref{eq:binomind} is trivial if
  $m > 0$, we let $m \leq 0$. We use the basic symmetry relation
  \begin{equation*}
    \binom{p^r m - 1}{k} = \binom{p^r m - 1}{p^r m - k - 1}
  \end{equation*}
  and note that, since $k < 0$, the binomials are zero unless $p^r m - k - 1
  \geq 0$. Observe that, for all integers $k, m$,
  \begin{equation}
    [(p^r m - k - 1) / p] = p^{r - 1} m + [- (k + 1) / p] = p^{r - 1} m - [k /
    p] - 1. \label{eq:brm}
  \end{equation}
  Thus, assuming $p^r m - 1 - k \geq 0$, we may apply \eqref{eq:binomind}
  to find
  \begin{eqnarray*}
    \binom{p^r m - 1}{k} (- 1)^k & = & \binom{p^r m - 1}{p^r m - k - 1} (-
    1)^k\\
    & \equiv & \binom{p^{r - 1} m - 1}{p^{r - 1} m - [k / p] - 1} (- 1)^{[k /
    p]} (- 1)^{p^r m + p^{r - 1} m}\\
    & = & \binom{p^{r - 1} m - 1}{[k / p]} (- 1)^{[k / p]} (- 1)^{p^r m +
    p^{r - 1} m} \pmod{p^r} .
  \end{eqnarray*}
  It only remains to note that $p^r m + p^{r - 1} m = p^{r - 1} (p + 1) m$ is
  even unless $p = 2$ and $r = 1$. Hence, in all cases, $(- 1)^{p^r m + p^{r -
  1} m} \equiv 1$ modulo $p^r$.
\end{proof}

\begin{lemma}
  \label{lem:binomind2}For primes $p$, integers $m_1, m_2, k$ and integers $r
  \geq 1$,
  \begin{equation*}
    \binom{p^r m_1 + p^r m_2 - k - 1}{p^r m_1} \equiv \binom{p^{r - 1} m_1 +
     p^{r - 1} m_2 - [k / p] - 1}{p^{r - 1} m_1} \pmod{p^r}
     .
  \end{equation*}
\end{lemma}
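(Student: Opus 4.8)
The plan is to adapt the product-splitting argument of Lemma~\ref{lem:binomind}, separating the factors of the binomial coefficient according to divisibility by $p$ and exploiting that the lower index $p^r m_1$ is a multiple of $p^r$. First I would dispose of the case $m_1 \geq 0$, where the left-hand side is an ordinary binomial coefficient with the product representation
\begin{equation*}
  \binom{p^r m_1 + p^r m_2 - k - 1}{p^r m_1} = \prod_{j = 1}^{p^r m_1} \frac{p^r m_2 - k - 1 + j}{j}.
\end{equation*}
The numerator runs over the block of $p^r m_1$ consecutive integers $p^r m_2 - k, \ldots, p^r m_2 - k - 1 + p^r m_1$ and the denominator over $1, \ldots, p^r m_1$; each block contains exactly $p^{r - 1} m_1$ multiples of $p$. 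Dividing the $p$-multiples in both blocks by $p$, the powers of $p$ cancel, and, using \eqref{eq:brm} to locate the first multiple of $p$ in the numerator block at $p (p^{r - 1} m_2 - [k / p])$, this part contributes exactly the level-$(r - 1)$ binomial $\binom{p^{r - 1} m_1 + p^{r - 1} m_2 - [k / p] - 1}{p^{r - 1} m_1}$. For the factors coprime to $p$ I would observe that a block of $p^r m_1$ consecutive integers represents each residue class modulo $p^r$ exactly $m_1$ times; hence the product of the $p$-coprime numerator factors and the product of the $p$-coprime denominator factors are both congruent to $W^{m_1}$ modulo $p^r$, where $W$ is the product of the integers in $\{ 1, \ldots, p^r \}$ coprime to $p$, so their ratio is $\equiv 1 \pmod{p^r}$. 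Combining the two parts, and noting that the level-$(r - 1)$ binomial is a $p$-adic integer, yields the congruence when $m_1 \geq 0$.

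To treat $m_1 < 0$, I would pass to the symmetric form $\binom{N}{L} = \binom{N}{N - L}$ with $N = p^r (m_1 + m_2) - k - 1$ and $N - L = p^r m_2 - k - 1$, which is legitimate in the extended sense of Remark~\ref{rk:apery-shifted} and reduces, in the only nontrivial case $N - L \geq 0$, to
\begin{equation*}
  \prod_{j = 1}^{p^r m_2 - k - 1} \frac{p^r m_1 + j}{j}.
\end{equation*}
Here the numerator block is the denominator block shifted by $p^r m_1 \in p^r \mathbb{Z}$, so each $p$-coprime numerator factor is congruent modulo $p^r$ to the corresponding denominator factor and the coprime part is again $\equiv 1 \pmod{p^r}$; the $p$-multiple part collapses, via \eqref{eq:brm} applied to $m_2$ for the count $p^{r - 1} m_2 - [k / p] - 1$, to the same level-$(r - 1)$ binomial. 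I would finish by checking that both sides vanish on exactly the same degenerate configurations, namely when $N - L < 0$ or when some numerator factor $p^r m_1 + j$ is zero, so that the claimed congruence holds in these cases as well.

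I expect the main obstacle to be the bookkeeping for negative entries: ensuring that the extended binomial coefficients of Remark~\ref{rk:apery-shifted} vanish simultaneously on both sides and that the product manipulations remain valid $p$-adically when individual factors are negative or zero. In contrast to Lemma~\ref{lem:binomind}, no stray sign should survive, because the two index blocks either have length divisible by $p^r$ (when $m_1 \geq 0$) or differ by a multiple of $p^r$ (after the symmetry, when $m_1 < 0$); I would verify this sign-freeness explicitly, in the spirit of the parity computation $(- 1)^{p^r m + p^{r - 1} m} \equiv 1$ at the end of Lemma~\ref{lem:binomind}. The arithmetic heart of the argument, that the $p$-coprime factors contribute trivially modulo $p^r$, is by comparison routine once the correct block decomposition is in place.
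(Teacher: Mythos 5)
Your argument is correct, but it takes a genuinely different route from the paper's. The paper disposes of Lemma~\ref{lem:binomind2} in three lines: by the negation rule \eqref{eq:binom-neg}, written in the symmetric form $\binom{m_1 + m_2 - k - 1}{m_1} = \operatorname{sgn}(m_2 - k - 1)\,(-1)^{m_2 - k - 1}\binom{-m_1 - 1}{m_2 - k - 1}$, the statement becomes an instance of Lemma~\ref{lem:binomind} with $m = -m_1$ and $k$ replaced by $p^r m_2 - k - 1$; the floor identity \eqref{eq:brm} together with $\operatorname{sgn}(a) = \operatorname{sgn}([a/p])$ then finishes the proof, the factor $(-1)^{p^r m_2 - k - 1}$ being exactly the sign already built into \eqref{eq:binomind}. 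You instead redo the product-splitting of Lemma~\ref{lem:binomind} from scratch for this particular binomial coefficient, in two cases according to the sign of $m_1$. Your case analysis is sound: for $m_1 \geq 0$ the $p$-coprime factors of numerator and denominator each run over the reduced residues modulo $p^r$ exactly $m_1$ times, while for $m_1 < 0$ the symmetric form makes the numerator block a shift of the denominator block by $p^r m_1 \in p^r\mathbb{Z}$; in both cases the $p$-divisible factors reassemble, via \eqref{eq:brm}, into the level-$(r-1)$ binomial, and the vanishing configurations match on the two sides, as you correctly insist on checking. What your approach buys is self-containedness and a slightly sharper multiplicative form (the two binomials differ by a $p$-adic unit $\equiv 1 \pmod{p^r}$ whenever they are nonzero); what it costs is precisely the negative-entry and zero-factor bookkeeping you flag, all of which the paper's reduction to Lemma~\ref{lem:binomind} sidesteps by pushing that work into the already-proven lemma.
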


\begin{proof}
  By an application of \eqref{eq:binom-neg},
  \begin{equation*}
    \binom{m_1 + m_2 - k - 1}{m_1} = \operatorname{sgn} (m_2 - k - 1) (- 1)^{m_2 - k
     - 1} \binom{- m_1 - 1}{m_2 - k - 1} .
  \end{equation*}
  Since, for all $a \in \mathbb{Z}$, $\operatorname{sgn} (a) = \operatorname{sgn} ([a / p])$,
  the claimed congruence therefore follows from \eqref{eq:brm} and
  Lemma~\ref{lem:binomind}.
\end{proof}

The following generalizes \cite[Lemma 3]{beukers-apery85} to our needs.

\begin{lemma}
  \label{lem:sumCind}Let $p$ be a prime and $\boldsymbol{n} \in \mathbb{Z}^d$.
  \begin{itemize}
    \item Let $a_k \in \mathbb{Z}_p$, with $k \in \mathbb{Z}$, be such that,
    for all $l, s \in \mathbb{Z}$ with $s \geq 0$,
    \begin{equation*}
      \sum_{[k / p^s] = l} a_k \equiv 0 \pmod{p^s} .
    \end{equation*}
    \item Let $C (\boldsymbol{n}; k)$ be such that, for all $k, r \in
    \mathbb{Z}$ with $r \geq 0$,
    \begin{equation}
      C (p^r \boldsymbol{n}; k) \equiv C (p^{r - 1} \boldsymbol{n}; [k / p])
      \pmod{p^r} . \label{eq:C-assum}
    \end{equation}
  \end{itemize}
  Then, for all $r, l \in \mathbb{Z}$ with $r \geq 0$,
  \begin{equation}
    \sum_{[k / p^r] = l} a_k C (p^r \boldsymbol{n}; k) \equiv 0 \pmod{p^r} . \label{eq:sumCind}
  \end{equation}
\end{lemma}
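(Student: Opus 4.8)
The plan is to prove the statement by induction on $r$, keeping $p$, $\boldsymbol{n}$ and the family $C$ fixed while allowing the sequence $(a_k)$ and the integer $l$ to vary. For $r = 0$ the index set $\{k : [k/p^r] = l\}$ is the singleton $\{l\}$, so \eqref{eq:sumCind} holds trivially modulo $p^0 = 1$. So assume $r \geq 1$ and that the assertion is known at level $r - 1$ for every admissible sequence and every $l$.

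First I would replace $C(p^r\boldsymbol{n};k)$ by $C(p^{r-1}\boldsymbol{n};[k/p])$. Since the index set $\{k : [k/p^r] = l\}$ is finite (it has exactly $p^r$ elements) and each difference $C(p^r\boldsymbol{n};k) - C(p^{r-1}\boldsymbol{n};[k/p])$ is divisible by $p^r$ by hypothesis \eqref{eq:C-assum}, while $a_k \in \mathbb{Z}_p$, the left-hand side of \eqref{eq:sumCind} satisfies
\[
\sum_{[k/p^r]=l} a_k C(p^r\boldsymbol{n};k) \equiv \sum_{[k/p^r]=l} a_k C(p^{r-1}\boldsymbol{n};[k/p]) \pmod{p^r}.
\]
Next I would regroup the right-hand sum according to the value $k' = [k/p]$. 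Using the floor identity $[k/p^r] = [[k/p]/p^{r-1}]$, the condition $[k/p^r]=l$ is equivalent to $[k'/p^{r-1}]=l$, and the fibres of $k \mapsto [k/p]$ partition the index set, so
\[
\sum_{[k/p^r]=l} a_k C(p^{r-1}\boldsymbol{n};[k/p]) = \sum_{[k'/p^{r-1}]=l} b_{k'} C(p^{r-1}\boldsymbol{n};k'), \qquad b_{k'} := \sum_{[k/p]=k'} a_k.
\]

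The key point is that the sequence $(b_{k'})$ becomes admissible after one division by $p$. By the hypothesis on $(a_k)$ with $s = 1$ we have $b_{k'} \equiv 0 \pmod p$, so we may write $b_{k'} = p\,c_{k'}$ with $c_{k'} \in \mathbb{Z}_p$. Moreover, for every $l$ and $s \geq 0$, collapsing the definition of $b_{k'}$ gives $\sum_{[k'/p^s]=l} b_{k'} = \sum_{[k/p^{s+1}]=l} a_k \equiv 0 \pmod{p^{s+1}}$, whence $\sum_{[k'/p^s]=l} c_{k'} \equiv 0 \pmod{p^s}$; that is, $(c_{k'})$ satisfies exactly the hypothesis imposed on $(a_k)$. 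Substituting $b_{k'} = p\,c_{k'}$ then reduces matters to showing
\[
\sum_{[k'/p^{r-1}]=l} c_{k'} C(p^{r-1}\boldsymbol{n};k') \equiv 0 \pmod{p^{r-1}},
\]
which is precisely the inductive hypothesis at level $r-1$ applied to the admissible sequence $(c_{k'})$ and the same $C$, $\boldsymbol{n}$. This closes the induction.

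I expect the only delicate points to be bookkeeping rather than conceptual: verifying the floor identity $[k/p^r] = [[k/p]/p^{r-1}]$ and the fibre decomposition uniformly for all integers $k$, including negative ones, and tracking the exact power of $p$ so that the single factor extracted from $b_{k'}$ matches the drop from modulus $p^r$ to $p^{r-1}$. The crucial structural observation, namely that passing from $(a_k)$ to $(c_{k'})$ preserves admissibility while the replacement step \eqref{eq:C-assum} and the extracted factor of $p$ together account for the full modulus $p^r$, is what makes the induction self-sustaining.
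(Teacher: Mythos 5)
Your proposal is correct and follows essentially the same induction as the paper: replace $C(p^r\boldsymbol{n};k)$ by $C(p^{r-1}\boldsymbol{n};[k/p])$ via \eqref{eq:C-assum}, regroup over $m=[k/p]$, extract one factor of $p$ from the fibre sums, check that the resulting sequence again satisfies the admissibility hypothesis, and invoke the inductive hypothesis at level $r-1$. The paper's $b_m$ is exactly your $c_{k'}$ (it absorbs the $1/p$ into the definition), so the two arguments coincide.
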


\begin{proof}
  The claim is trivial for $r = 0$. Fix $r > 0$ and assume, for the purpose of
  induction on $r$, that the congruence \eqref{eq:sumCind} holds for the
  exponent $r - 1$ in place of $r$. By the assumption \eqref{eq:C-assum} on $C
  (\boldsymbol{n}; k)$, we have that, modulo $p^r$,
  \begin{eqnarray*}
    \sum_{[k / p^r] = l} a_k C (p^r \boldsymbol{n}; k) & \equiv & \sum_{[k /
    p^r] = l} a_k C (p^{r - 1} \boldsymbol{n}; [k / p])\\
    & = & \sum_{[m / p^{r - 1}] = l} \left( \sum_{[k / p] = m} a_k \right) C
    (p^{r - 1} \boldsymbol{n}; m)\\
    & = & p \sum_{[m / p^{r - 1}] = l} b_m C (p^{r - 1} \boldsymbol{n}; m),
  \end{eqnarray*}
  where $b_m$ is the sequence
  \begin{equation*}
    b_m = \frac{1}{p} \sum_{[k / p] = m} a_k .
  \end{equation*}
  We note that, for all $s, l \in \mathbb{Z}$ with $s \geq 0$,
  \begin{equation*}
    \sum_{[m / p^s] = l} b_m = \frac{1}{p} \sum_{[m / p^s] = l} \sum_{[k / p]
     = m} a_k = \frac{1}{p} \sum_{[k / p^{s + 1}] = m} a_k \equiv 0
     \pmod{p^s},
  \end{equation*}
  so that we may apply our induction hypothesis \eqref{eq:sumCind} with $r -
  1$ to conclude
  \begin{equation*}
    \sum_{[k / p^r] = l} a_k C (p^r \boldsymbol{n}; k) = p \sum_{[m / p^{r -
     1}] = l} b_m C (p^{r - 1} \boldsymbol{n}; m) \equiv 0 \pmod{p^r} .
  \end{equation*}
  The claim therefore follows by induction.
\end{proof}

We are now in a comfortable position to prove Theorem \ref{thm:aperyx-sc}.

\begin{proof}[Proof of Theorem \ref{thm:aperyx-sc}]
  In terms of the numbers $A_{\lambda,
  \varepsilon} (\boldsymbol{n}; k)$, defined in \eqref{eq:Alambdak}, we have
  \begin{equation*}
    A_{\lambda, \varepsilon} (\boldsymbol{n}) = \sum_{k \geq 0}
     \varepsilon^k A_{\lambda} (\boldsymbol{n}; k) = \sum_{s \geq 0} G_s
     (\boldsymbol{n}),
  \end{equation*}
  where
  \begin{equation*}
    G_s (\boldsymbol{n}) = \sum_{p \nmid k} \varepsilon^{p^s k} A_{\lambda}
     (\boldsymbol{n}; p^s k) .
  \end{equation*}
  Suppose that $\ell \geq 2$. Further, suppose that $p \geq 3$, or
  that $p = 2$ and $\varepsilon = 1$. Then $\varepsilon^{p^s k} =
  \varepsilon^{p^{s - 1} k}$, and it follows from Lemma~\ref{lem:Alambdak}
  that, for $s \geq 1$,
  \begin{equation*}
    G_s (p^r \boldsymbol{n}) \equiv G_{s - 1} (p^{r - 1} \boldsymbol{n})
     \pmod{p^{2 r}} .
  \end{equation*}
  In order to prove that $A_{\lambda, \varepsilon} (p^r \boldsymbol{n}) \equiv
  A_{\lambda, \varepsilon} (p^{r - 1} \boldsymbol{n})$ modulo $p^{2 r}$, it
  therefore remains only to show that $G_0 (p^r \boldsymbol{n}) \equiv 0$ modulo
  $p^{2 r}$. This, however, is immediate because, as observed in the proof of
  Lemma~\ref{lem:Alambdak}, $A_{\lambda} (p^r \boldsymbol{n}; k)$, with $p \nmid
  k$, is divisible by $p^{\ell r}$. This proves congruence
  \eqref{eq:aperyx-sc2}.
  
  Now, suppose that $\ell \geq 2$ and $\max (\lambda_1, \ldots,
  \lambda_{\ell}) \leq 2$. Let $p$ be a prime such that $p \geq 5$.
  It again follows from $\varepsilon^{p^s k} = \varepsilon^{p^{s - 1} k}$ and
  Lemma~\ref{lem:Alambdak} that, for $s \geq 1$,
  \begin{equation*}
    G_s (p^r \boldsymbol{n}) \equiv G_{s - 1} (p^{r - 1} \boldsymbol{n})
     \pmod{p^{3 r}} .
  \end{equation*}
  To prove that $A_{\lambda, \varepsilon} (p^r \boldsymbol{n}) \equiv
  A_{\lambda, \varepsilon} (p^{r - 1} \boldsymbol{n})$ modulo $p^{3 r}$, we have
  to show that $G_0 (p^r \boldsymbol{n}) \equiv 0$ modulo $p^{3 r}$. As in the
  previous case, this is trivial if $\ell \geq 3$. We thus assume $\ell =
  2$.
  
  Note that, since $\max (\lambda_1, \ldots, \lambda_{\ell}) \leq 2$,
  each factor of $A_{\lambda} (\boldsymbol{n}; k)$ is of the form
  \begin{equation*}
    \binom{m_1}{k}, \hspace{1em} \text{or} \hspace{1em} \binom{m_1}{k}
     \binom{m_1 + m_2 - k}{m_1} .
  \end{equation*}
  Using the basic identity
  \begin{equation*}
    \binom{m_1}{k} = \frac{m_1}{k} \binom{m_1 - 1}{k - 1},
  \end{equation*}
  it is clear that the numbers
  \begin{equation*}
    B_{\lambda} (\boldsymbol{n}; k) = \frac{k^2}{n_1 n_{1 + \lambda_1}}
     A_{\lambda} (\boldsymbol{n}; k)
  \end{equation*}
  are integers. Moreover, it follows from Lemmas~\ref{lem:binomind} and
  \ref{lem:binomind2}, and the fact that $\ell = 2$, that the integers
  $C_{\lambda} (\boldsymbol{n}; k) = B_{\lambda} (\boldsymbol{n}; k + 1)$ satisfy,
  for all $k, r \in \mathbb{Z}$ with $r \geq 0$,
  \begin{equation*}
    C (p^r \boldsymbol{n}; k) \equiv C (p^{r - 1} \boldsymbol{n}; [k / p])
     \pmod{p^r} .
  \end{equation*}
  If $p \nmid k$ then $[(k - 1) / p] = [k / p]$ so that, in particular,
  \begin{equation*}
    C (p^r \boldsymbol{n}, k - 1) \equiv C (p^r \boldsymbol{n}, [k / p]) \equiv C
     (p^r \boldsymbol{n}; k) \pmod{p^r} .
  \end{equation*}
  By construction,
  \begin{equation*}
    G_0 (p^r \boldsymbol{n}) = p^{2 r} n_1 n_{1 + \lambda_1} \sum_{p \nmid k}
     \frac{\varepsilon^k}{k^2} C (p^r \boldsymbol{n}; k - 1),
  \end{equation*}
  so that, in order to show that $G_0 (p^r \boldsymbol{n}) \equiv 0$ modulo
  $p^{3 r}$, it suffices to prove
  \begin{equation}
    \sum_{p \nmid k} \frac{\varepsilon^k}{k^2} C (p^r \boldsymbol{n}; k) \equiv
    0 \pmod{p^r} . \label{eq:sumC}
  \end{equation}
  Define $a_k = \varepsilon^k / k^2$, if $p \nmid k$, and $a_k = 0$ otherwise.
  Since $p \geq 5$, it follows from Lemma~\ref{lem:powersummod} that, for
  all $l, s \in \mathbb{Z}$ with $s \geq 0$,
  \begin{equation*}
    \sum_{[k / p^s] = l} a_k = \sum_{k = 1, p \nmid k}^{p^s - 1}
     \frac{\varepsilon^{l p^s + k}}{(l p^s + k)^2} \equiv \varepsilon^l
     \sum_{k = 1, p \nmid k}^{p^s - 1} \frac{\varepsilon^k}{k^2} \equiv 0
     \pmod{p^s} .
  \end{equation*}
  Hence, the conditions of Lemma~\ref{lem:sumCind} are met, allowing us to
  conclude that
  \begin{equation*}
    \sum_{p \nmid k} \frac{\varepsilon^k}{k^2} C (p^r \boldsymbol{n}; k) =
     \sum_l \sum_{[k / p^r] = l} a_k C (p^r \boldsymbol{n}; k) \equiv 0
     \pmod{p^r} .
  \end{equation*}
  This shows \eqref{eq:sumC} and completes our proof.
\end{proof}

\begin{acknowledgements}
I wish to thank Robert Osburn and Wadim Zudilin
for interesting and motivating discussions on Ap\'ery-like numbers and
supercongruences. Much appreciated comments on an earlier version of this note
have also been sent by Bruce Berndt and Brundaban Sahu. Moreover, I am very
grateful to the referee for several suggestions that improved this paper.
Finally, I thank the Max-Planck-Institute for Mathematics in Bonn, where most
of this work was completed, for providing wonderful working conditions.
\end{acknowledgements}

% \bibliography{../../math}
% \bibliographystyle{alphaabbr}

\newcommand{\etalchar}[1]{$^{#1}$}

\end{document}